\newtheorem{definition}{Definition}
\newtheorem{theorem}[definition]{Theorem}
\newtheorem{lemma}[definition]{Lemma}
\newtheorem{conjecture}[definition]{Conjecture}
\newcommand{\comment}[1]{}
\newcommand{\A}{\mathcal A}
\newcommand{\B}{\mathcal B}
\newcommand{\sm}{\setminus}
\newcommand{\mstab}{\mathcal{A}}
\newcommand{\rickety}{rare}
\title{The graph formulation of the union-closed sets conjecture}
\author[1]{Henning Bruhn}
\author[2]{Pierre Charbit}
\author[1]{Oliver Schaudt}
\author[3]{Jan Arne Telle\thanks{Part of this  research done while visiting LIAFA in 2011}}
\affil[1]{IMJ, Universit\'e Pierre et Marie Curie}
\affil[2]{LIAFA, Universit\'e Paris Diderot}
\affil[3]{Department of Informatics, University of Bergen}
\date{}
\begin{document}
\maketitle

\begin{abstract}
In 1979 Frankl conjectured that in a finite non-trivial union-closed collection 
of sets there has to be an element that belongs to at least half the sets.
We show that this is equivalent to the conjecture that in a finite non-trivial graph 
there are two adjacent vertices each belonging
to at most half of the maximal stable sets.
In this graph formulation other special cases become natural. 
The conjecture is trivially true for non-bipartite graphs and we show that
it holds also for the classes of chordal bipartite graphs, subcubic bipartite graphs, bipartite series-parallel graphs 
and bipartitioned circular interval graphs.
\end{abstract}

\section{Introduction} 

A set $\mathcal X$ of sets is \emph{union-closed}
if $X,Y\in\mathcal X$ implies $X\cup Y\in\mathcal X$.
The following conjecture was formulated by Peter Frankl in 1979 \cite{Frankl}. 

\newtheorem*{ucsc}{Union-closed sets conjecture}
\begin{ucsc}
Let $\mathcal X$ be a finite union-closed set of sets
with $\mathcal X \neq \{ \emptyset\}$. Then there is a $x\in\bigcup_{X\in\mathcal X}X$
that lies in at least half of the members of $\mathcal X$.
\end{ucsc}

In spite of a great number of papers, see e.g.\ the good bibliography of Markovi\'{c} \cite{M2007} for papers up to 2007,
this conjecture is still wide open. Several special cases are known to hold,
for example when $|\bigcup_{X\in\mathcal X}X|$ is upper bounded, with current best being 11 by Bo\v{s}njak and Markovi\'{c} \cite{Bosnjak},
or when $|\mathcal X|$ is upper bounded, with current best being 46. 
This follows from a lemma by Lo~Faro \cite{LF94b}, 
and independently by Roberts and Simpson~\cite{Simp}. 
The conjecture also holds when certain sets are present in $\mathcal X$, such as a set of size 2 as shown by Sarvate and Renaud \cite{SR89}.
Possibly as a reflection of its general difficulty, Gowers \cite{Gowers} suggested that work on this conjecture 
could fruitfully be done as a collaborative Polymath project.
See~\cite{franklsurvey} for a survey of the literature on the union-closed sets conjecture.

Various equivalent formulations have been discovered. 
We mention in particular Poonen \cite{Poo92} who translates 
the conjecture into the language of lattice theory. 
Several subsequent results together with their proofs 
belong to lattice theory, for example Reinhold~\cite{Reinhold} who proves 
this conjecture for lower semimodular lattices.
A version of the conjecture is also known for hypergraphs; see El-Zahar~\cite{EZ97}. 

In this paper we give a formulation of the conjecture in the language of graph theory.
A set of vertices in a graph is \emph{stable} if no
two vertices of the set are adjacent.
A stable set is \emph{maximal} if it is  
maximal under inclusion, that is, every vertex outside has a neighbour
in the stable set.

\begin{conjecture}\label{graphconj}
Let $G$ be a finite graph with at least one edge. 
Then there will be two adjacent vertices each belonging to at most half of the maximal stable sets. 
\end{conjecture}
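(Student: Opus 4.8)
The plan is to work directly with the counting quantity behind the statement. Let $\mathcal M$ denote the family of maximal stable sets of $G$, and for a vertex $w$ write $d(w)=|\{M\in\mathcal M: w\in M\}|$ for the number of maximal stable sets containing $w$. Call $w$ \emph{heavy} if $d(w)>\tfrac12|\mathcal M|$ and \emph{light} otherwise. The conjecture then reads: there is an edge whose two endpoints are both light. The single structural fact I would exploit throughout is that, for an edge $uv$, no maximal stable set can contain both $u$ and $v$, so the families $\{M:u\in M\}$ and $\{M:v\in M\}$ are disjoint and hence
\begin{equation}
d(u)+d(v)\le |\mathcal M|\qquad\text{for every edge }uv. \tag{$\ast$}
\end{equation}

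First I would dispose of the non-bipartite case for free. By $(\ast)$ two heavy vertices can never be adjacent, so the heavy vertices always form a stable set. Suppose now, for contradiction, that no edge has two light endpoints; then the light vertices also form a stable set. Since every vertex is either heavy or light, $V(G)$ would split into two stable sets, forcing $G$ to be bipartite. Consequently the statement holds for every graph that is not bipartite, and any counterexample must be bipartite with the heavy vertices on one side and the light vertices on the other.

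Next I would attack the bipartite case by a global averaging step followed by a structural one. Summing $(\ast)$ over all edges gives $\sum_w \deg(w)\,d(w)=\sum_{uv\in E}\bigl(d(u)+d(v)\bigr)\le |E|\,|\mathcal M|$, so the degree-weighted average of $d(w)$ is at most $\tfrac12|\mathcal M|$ and light vertices are plentiful. The difficulty is that averaging only locates light vertices, whereas the statement demands two light vertices that are \emph{adjacent}. Equivalently, I must rule out the extremal configuration isolated in the previous paragraph, in which one side $A$ of the bipartition consists entirely of heavy vertices. To exclude it I would set up an induction on $|V(G)|$ in a vertex-minimal counterexample and look for a local reduction --- deleting a pendant vertex or a pair of twins, or decomposing along a small cut --- that produces a smaller graph whose maximal stable sets are controlled by those of $G$, so that heaviness of a whole side is inherited and contradicted.

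The main obstacle is precisely this last step, and I expect it to be genuinely hard: maximality of stable sets behaves badly under vertex deletion and contraction, so the inductive bookkeeping relating the maximal stable sets of $G$ to those of $G-v$ is delicate, and for an unrestricted bipartite graph no such reduction is known. What I would realistically expect to carry through is the programme above on bipartite graphs with enough structure to guarantee a usable reduction vertex or decomposition, which is exactly the route by which the special classes announced in the abstract (chordal bipartite, subcubic bipartite, series-parallel, and bipartitioned circular interval graphs) become tractable.
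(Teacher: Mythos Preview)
The statement you are trying to prove is a \emph{conjecture}, and the paper does not prove it. What the paper does is observe that the non-bipartite case is trivial (essentially your argument via $(\ast)$ and an odd cycle), reduce Conjecture~\ref{graphconj} to the bipartite Conjecture~\ref{mstabconj}, and then prove that Conjecture~\ref{mstabconj} is \emph{equivalent to the union-closed sets conjecture}, which is famously open. So there is no ``paper's own proof'' to compare against.

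Your handling of the non-bipartite case is correct and is exactly the paper's observation. Your attack on the bipartite case, however, is not a proof but a sketch that you yourself concede does not close: the averaging inequality $\sum_w \deg(w)\,d(w)\le |E|\,|\mathcal M|$ only says light vertices exist, not that two of them are adjacent, and your proposed induction via deleting pendants, twins, or small cuts is precisely where the problem is known to resist --- the paper's equivalence theorem shows that finishing this step in general would resolve Frankl's conjecture. In short, the gap you identify in your own write-up (``I expect it to be genuinely hard'') is real and, as far as anyone knows, unbridged; your proposal is a correct reduction to the open heart of the problem, not a proof of the statement.
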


Note that Conjecture \ref{graphconj} is true for non-bipartite graphs.
Indeed, if vertices $u$ and $v$ are adjacent there is no stable set containing them both and so one of them
must belong to at most half of the maximal stable sets. An odd cycle will therefore imply
the existence of two adjacent vertices each belonging to at most half of the maximal stable sets. 
The conjecture is for this reason open only for bipartite graphs.
Moreover, in a connected bipartite graph, for any two vertices $u$ and $v$ in different bipartition classes we have
a path from $u$ to $v$ containing an odd number of edges, so that if $u$ and $v$ each belongs to at most half the maximal stable sets
there will be two adjacent vertices each belonging to at most half the maximal stable sets.
Conjecture \ref{graphconj} is therefore equivalent to the following.

\begin{conjecture}\label{mstabconj}
Let $G$ be a finite bipartite graph with at least one edge. 
Then each of the two bipartition classes contains a vertex 
belonging to at most half of the maximal stable sets. 
\end{conjecture}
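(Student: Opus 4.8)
The plan is to fix a bipartition $(A,B)$ of $G$, to write $m(v)$ for the number of maximal stable sets containing a vertex $v$ and $\mu$ for the total number of maximal stable sets, and to call $v$ \emph{rare} if $m(v)\le \mu/2$ and \emph{abundant} otherwise. The statement is symmetric under swapping $A$ and $B$, so it suffices to exhibit a rare vertex in $A$ (the same argument then applies to $B$); I would therefore assume, aiming at a contradiction, that every vertex of $A$ is abundant. It is worth noting at the outset that the two requirements (a rare vertex in $A$, a rare vertex in $B$) are genuinely separate, so the proof cannot rest on any quantity blind to the bipartition.

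My first move would be the averaging bound. Summing incidences gives $\sum_{a\in A} m(a) = \sum_{S} |S\cap A|$, where $S$ ranges over the maximal stable sets, so $A$ contains a rare vertex as soon as the mean of $|S\cap A|$ over the $\mu$ maximal stable sets is at most $|A|/2$. This is the natural first step, but it is provably too weak: attaching several isolated vertices to $A$ (they lie in every maximal stable set) pushes the mean above $|A|/2$, while a rare vertex of $A$ still exists. I would therefore pass to a weighted count, assigning each maximal stable set $S$ a weight $w(S)$ and each vertex a charge, and seek a discharging inequality that forces a rare vertex in $A$ --- equivalently, a probability distribution on maximal stable sets under which the expected trace on $A$ is small.

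The cleaner route, and the one I would ultimately push, is to build for a single well-chosen $a\in A$ an injection $\Phi$ from the maximal stable sets containing $a$ into those avoiding $a$, which immediately certifies $a$ as rare. The natural candidate deletes $a$ from $S$, inserts those neighbours $b\in N(a)\subseteq B$ whose only neighbour in $S$ was $a$, and then re-maximises. The difficulty is that this re-maximisation is not canonical, that $a$ may be re-admissible when no such $b$ exists, and that injectivity is consequently delicate. This is exactly where the whole weight of the problem sits: by the equivalence established in the previous section, an injection of this kind valid for every bipartite graph would settle Frankl's conjecture, so I expect no unconditional argument to succeed at this level of generality --- this is the main obstacle.

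The feasible programme, then, is to run the injection (or the weighted discharging) under structural hypotheses on $G$. For bipartite graphs that are chordal, subcubic, series-parallel, or circular-interval, one can hope to locate a distinguished vertex of $A$ --- a simplicial-type vertex supplied by a tree-decomposition, a vertex of small degree, or a terminal of the series-parallel construction --- to play the role of $a$, and to control the re-maximisation by induction on the underlying decomposition. In short, the plan reduces the general statement to the single hard task of constructing the local injection, and the realistic course is to carry out that task class by class, as the remainder of the paper does.
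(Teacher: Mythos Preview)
The statement you were asked to prove is Conjecture~\ref{mstabconj}, which the paper does \emph{not} prove in general: it is shown to be equivalent to Frankl's union-closed sets conjecture and then verified only for several restricted classes. Your proposal correctly recognises this, explicitly identifying the equivalence as ``the main obstacle'' and concluding that the realistic programme is class by class. So what you have written is not a proof and does not pretend to be one; as a plan it is in line with what the paper actually carries out.

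Two comments on the technical content are nevertheless worth making. First, your averaging argument is indeed too weak, for the reason you give. Second, your injection idea---remove $a$ from $S$, add the neighbours of $a$ forced in, and re-maximise---is close in spirit to the paper's main tool, but the paper runs the injection in the opposite direction and with a different source set: Lemma~\ref{injection} injects $\mstab_{N(x)}$ into $\mstab_x$, not $\mstab_a$ into $\mstab_{\overline a}$. That injection is canonical and valid in every bipartite graph (no re-maximisation ambiguity), precisely because it asks for less: by itself it never certifies a single vertex as rare. The leverage comes from combining it with local hypotheses---$N^2(x)\subseteq N(y)$ in Lemma~\ref{onelem}, or $N^2(x)\subseteq N(y)\cup N(z)$ in Lemma~\ref{twolem}---which force $\mstab_y$ (or $\mstab_{yz}$) to coincide with $\mstab_{N(x)}$. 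Thus the ``canonical re-maximisation'' difficulty you flag is sidestepped rather than solved: one proves a weaker, always-valid injection and then spends the structural assumptions of each graph class to close the gap. If you pursue the class-by-class programme, this is the mechanism to aim for rather than a direct injection $\mstab_a\hookrightarrow\mstab_{\overline a}$.
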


%
%
%
%

In this paper we show that Conjectures \ref{graphconj} and \ref{mstabconj} are equivalent to the union-closed sets conjecture.
The merit of this graph formulation is that other special cases become natural, 
in particular subclasses of bipartite graphs.
We show that the conjecture holds for the classes of chordal bipartite graphs
and bipartitioned circular interval graphs, and for  
subcubic and series-parallel bipartite graphs.
Moreover, the reformulation allows to test Frankl's conjecture
in a probabilistic sense: In~\cite{rndfrankl} it 
is shown that almost every random bipartite graph satisfies Conjecture~\ref{mstabconj}
up to any given $\delta>0$, that is, 
almost every such graph contains in each bipartition class a vertex
for which the number of maximal stable sets containing it 
is at most $\tfrac{1}{2}+\delta$ times the total number 
of maximal stable sets.

\medskip
Stable sets are also called independent sets, with the maximal stable sets being exactly the independent dominating sets.
A stable set of a graph is a clique of the complement graph and the graph formulation of the conjecture can also be stated in terms of maximal cliques, instead of maximal stable sets.
The set of all maximal stable sets of a bipartite graph, or rather maximal complete bipartite cliques (bicliques) of the bipartite complement graph, was studied by Prisner~\cite{Prisner} who gave upper bounds on the size of this set, also when excluding certain subgraphs.
More recently, Duffus, Frankl and R\"odl~\cite{Duffus} and Ilinca and Kahn~\cite{Ilinca} investigate the number of maximal stable sets in certain regular and biregular bipartite graphs.
In work related to the graph parameter boolean-width, Rabinovich, Vatshelle and Telle~\cite{RTV} study balanced bipartitions of a graph that bound the number of maximal stable sets.
However, we have not found in the graph theory literature any previous work focusing on the number of maximal stable sets that vertices belong to.

\section{Equivalence of the conjectures}

For a subset $S$ of vertices of a graph we denote by $N(S)$ the set of vertices adjacent to a vertex in $S$.
All our graphs will be finite, and whenever we consider a union-closed set $\mathcal X$
of sets, it will be a finite set, all of whose member-sets will be finite as
well. As Poonen~\cite{Poo92} observed the latter assumption does not restrict
generality, while the conjecture becomes false if $\mathcal X$
is allowed to have infinitely many sets.

We need two easy lemmas. The proof of the first is trivial.
\begin{lemma}\label{structureMSS}
Let $G$ be a bipartite graph with bipartition $U,W$, 
and let $S$ be a maximal stable set. 
Then $S=(U\cap S)\cup (W\sm N(U\cap S))$.
\end{lemma}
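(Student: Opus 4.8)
The plan is to split $S$ along the bipartition: set $S_U := U\cap S$ and $S_W := W\cap S$, so trivially $S = S_U\cup S_W$, and then to show that $S_W = W\setminus N(S_U)$. Both inclusions are one-liners, so the whole lemma reduces to these.

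For $S_W\subseteq W\setminus N(S_U)$: since $S$ is stable and both $S_U$ and $S_W$ are subsets of $S$, no vertex of $S_W$ is adjacent to any vertex of $S_U$; that is, $S_W\cap N(S_U)=\emptyset$, which is exactly the stated inclusion.

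For the reverse inclusion $W\setminus N(S_U)\subseteq S_W$, I would argue contrapositively. Take any $w\in W$ with $w\notin S$. By maximality of $S$, the vertex $w$ has a neighbour in $S$; since $G$ is bipartite and $w\in W$, every neighbour of $w$ lies in $U$, hence in $U\cap S=S_U$. Thus $w\in N(S_U)$. Equivalently, any $w\in W\setminus N(S_U)$ lies in $S$, and being in $W$ it lies in $S_W$.

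Combining the two inclusions gives $S_W = W\setminus N(S_U)$, whence $S = S_U\cup S_W = (U\cap S)\cup\bigl(W\setminus N(U\cap S)\bigr)$, as claimed. There is no genuine obstacle here — consistent with the remark that the proof is trivial — the only subtlety being that the reverse inclusion is precisely where both hypotheses enter: maximality supplies a neighbour of $w$ inside $S$, and bipartiteness forces that neighbour into $U$.
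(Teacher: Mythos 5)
Your proof is correct and is exactly the argument the paper has in mind when it declares the proof trivial: both inclusions of $W\cap S = W\setminus N(U\cap S)$, with stability giving one direction and maximality plus bipartiteness giving the other. Nothing to add.
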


\begin{lemma}\label{MSScap}
Let $G$ be a bipartite graph with bipartition $U,W$, 
and let $S$ and $T$  be maximal stable sets. 
Then $(U\cap S\cap T)\cup (W\sm N(S\cap T))$ is a 
maximal stable set.
\end{lemma}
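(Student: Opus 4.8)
The plan is to write $I := S\cap T$, which is a stable set because it is contained in the stable set $S$, and then to check directly that the set $Z := (U\cap I)\cup(W\sm N(I))$ from the statement is stable and that every vertex outside it has a neighbour inside it.

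For stability, since $G$ is bipartite with classes $U$ and $W$, there are no edges inside $U\cap I\subseteq U$ nor inside $W\sm N(I)\subseteq W$, so the only possible edge within $Z$ would join some $u\in U\cap I$ to some $w\in W\sm N(I)$; but then $w\in N(u)\subseteq N(I)$, contradicting $w\notin N(I)$. For maximality, I would take a vertex $v\notin Z$ and split on its bipartition class. If $v\in W$, then $v\notin W\sm N(I)$ gives $v\in N(I)$, so $v$ has a neighbour $u\in I$, and $u\in U$ by bipartiteness, hence $u\in U\cap I\subseteq Z$. If $v\in U$, then $v\notin U\cap I$ gives $v\notin I$, i.e.\ $v\notin S$ or $v\notin T$; assuming $v\notin S$ (the other case being symmetric with $T$ in place of $S$), maximality of $S$ yields a neighbour $w\in S$ of $v$, with $w\in W$ by bipartiteness. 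If $w$ had a neighbour in $I\subseteq S$, that edge would lie inside the stable set $S$, which is impossible, so $w\in W\sm N(I)\subseteq Z$. This covers all cases, so $Z$ is a maximal stable set.

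I do not expect a genuine obstacle here. The one point worth noting is that maximality of $S$ (resp.\ $T$) enters only in the case $v\in U$, whereas the case $v\in W$ uses nothing but bipartiteness; so one should be sure to invoke the right maximal stable set in the first case. One should also keep in mind throughout that $N(S\cap T)$ denotes the neighbourhood of the vertex set $S\cap T$, in keeping with the notation $N(S)$ fixed at the start of the section, so that $Z$ really is the set appearing in the lemma.
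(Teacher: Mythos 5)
Your proof is correct and follows essentially the same route as the paper's: check stability directly, note that every vertex of $W$ outside the set lies in $N(S\cap T)$ and hence has a neighbour in $U\cap S\cap T$, and for a vertex of $U$ outside the set use maximality of whichever of $S$, $T$ it fails to belong to, together with stability of that set, to find a neighbour in $W\sm N(S\cap T)$. No issues.
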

\begin{proof}
Clearly, $R=(U\cap S\cap T)\cup (W\sm N(S\cap T))$ is stable. 
Trivially, any vertex in $W\sm R$ has a neighbour in $R$. A vertex $u$ in 
$U\sm R$ does not lie in $S$ or not in $T$ (perhaps, it is not contained in either),
let us say that $u\notin T$. As $T$ is maximal, $u$ has a neighbour $w\in W\cap T$. 
This neighbour $w$ cannot be adjacent to any vertex in $U\cap S\cap T$ as $T$ is stable. 
So, $w$ belongs to $R$ as well, which shows that $R$ is a maximal stable set.
\end{proof}

For a fixed graph $G$ let us denote by $\mstab$ the set
of all maximal stable sets, and for any vertex
$v$ let us write $\mstab_v$ for the sets of $\mstab$
that contain $v$ and $\mstab_{\overline v}$ for the sets
of $\mstab$ that do not contain $v$. 
Let us call a vertex $v$ \emph{\rickety} if $|\mstab_v|\leq\frac{1}{2}|\mstab|$.

\begin{theorem}
Conjecture~\ref{mstabconj} is equivalent to the union-closed sets
conjecture.
\end{theorem}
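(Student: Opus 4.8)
The plan is to prove both implications, exploiting in each direction the dictionary between maximal stable sets of a bipartite graph and subsets of one bipartition class that is supplied by Lemmas~\ref{structureMSS} and~\ref{MSScap}. For a bipartite graph with bipartition $U,W$ write $\mathcal U:=\{U\cap S : S\in\mstab\}$. By Lemma~\ref{structureMSS} the map $S\mapsto U\cap S$ is a bijection from $\mstab$ onto $\mathcal U$, and since $W\sm N(S\cap T)=W\sm N(U\cap S\cap T)$, the set produced by Lemma~\ref{MSScap} is exactly the maximal stable set attached to $(U\cap S)\cap(U\cap T)$; thus Lemma~\ref{MSScap} says precisely that $\mathcal U$ is closed under intersection. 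Complementing, $\mathcal X_U:=\{U\sm A : A\in\mathcal U\}$ is a union-closed family on the ground set $U$ with $|\mathcal X_U|=|\mstab|$, and for each $v\in U$ we have $|\mstab_v|=|\{A\in\mathcal U : v\in A\}|=|\{X\in\mathcal X_U : v\notin X\}|$; hence $v$ is \rickety{} if and only if $v$ lies in at least half the members of $\mathcal X_U$.

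For the implication from the union-closed sets conjecture to Conjecture~\ref{mstabconj}, let $G$ be bipartite with at least one edge. First I would delete all isolated vertices: each such vertex lies in every maximal stable set, so this changes neither $|\mstab|$ nor which of the remaining vertices are \rickety{}, and it preserves both that $G$ has an edge and that both classes are nonempty. Now $U$ and $W$ are themselves maximal stable sets, so $\emptyset,U\in\mathcal U$ and hence $\emptyset,U\in\mathcal X_U$; moreover, extending $\{u\}$ and $\{w\}$ for an edge $uw$ to maximal stable sets gives two distinct members of $\mstab$, so $\mathcal X_U\neq\{\emptyset\}$. Applying the union-closed sets conjecture to $\mathcal X_U$ yields some $x\in\bigcup\mathcal X_U\subseteq U$ lying in at least half of $\mathcal X_U$, that is, a \rickety{} vertex $x\in U$ of $G$; by the symmetric argument $W$ also contains a \rickety{} vertex.

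For the converse, let $\mathcal X$ be a finite union-closed family of finite sets with $\mathcal X\neq\{\emptyset\}$; we may assume $\emptyset\in\mathcal X$, since the conclusion for $\mathcal X\cup\{\emptyset\}$ implies it for $\mathcal X$. Put $V:=\bigcup_{X\in\mathcal X}X$ and build the bipartite graph $G$ with classes $U:=V$ and $W:=\{w_X : X\in\mathcal X\}$, joining $w_X$ to $v$ exactly when $v\in X$, so that $N(w_X)=X$. The crucial point is to identify the maximal stable sets of $G$. By Lemma~\ref{structureMSS} a set $A\subseteq U$ equals $U\cap S$ for some $S\in\mstab$ exactly when $A\cup(W\sm N(A))$ is maximal, which unwinds to the requirement that every $u'\in U\sm A$ has a neighbour outside $N(A)$; read through the neighbourhoods $N(w_X)=X$ this says precisely that $V\sm A$ is a union of members of $\mathcal X$, hence, as $\mathcal X$ is union-closed and contains $\emptyset$, that $V\sm A\in\mathcal X$, i.e.\ $A\in\mathcal U$. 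Therefore $\mathcal X_U=\mathcal X$, so by the equivalence noted above a vertex $v\in U$ is \rickety{} in $G$ if and only if $v$ lies in at least half the members of $\mathcal X$. Since $G$ has an edge (any $X\neq\emptyset$ provides one), Conjecture~\ref{mstabconj} supplies a \rickety{} vertex in $U$, and this is exactly the element of $\bigcup\mathcal X$ demanded by the union-closed sets conjecture.

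I expect the main obstacle to be the construction in the second part, specifically the verification that the maximal stable sets of the incidence graph $G$ correspond exactly to $\mathcal X$. The cleanest route is the closure-operator reformulation lurking behind Lemma~\ref{structureMSS}: a subset $A\subseteq U$ gives a maximal stable set if and only if $A$ is a fixed point of $A\mapsto\{u' : N(u')\subseteq N(A)\}$, equivalently if and only if $V\sm A$ is a union of neighbourhoods of vertices of $W$; once this is in hand, choosing those neighbourhoods to be exactly the members of $\mathcal X$ makes everything fall into place. The first part is then essentially bookkeeping, the only delicate point being the removal of isolated vertices so that $\emptyset\in\mathcal X_U$ and $|\mstab|\geq2$.
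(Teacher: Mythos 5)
Your proposal is correct and follows essentially the same route as the paper: the same incidence-graph construction for the direction from Conjecture~\ref{mstabconj} to the union-closed sets conjecture, and the same complementation $S\mapsto U\sm S$ together with Lemmas~\ref{structureMSS} and~\ref{MSScap} for the converse. The differences (removing isolated vertices, phrasing maximality as ``$V\sm A$ is a union of members of $\mathcal X$'' instead of the paper's explicit injectivity/surjectivity check) are only presentational.
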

\begin{proof}
Let us consider first a union-closed set $\mathcal X\neq\{\emptyset\}$,
which, without restricting generality, we may assume 
to include $\emptyset$ as a member. 
We put $U=\bigcup_{X\in\mathcal X}X$ and
we define a bipartite graph $G$ with vertex set $U\cup\mathcal X$,
where we make $X\in\mathcal X$ adjacent with all $u\in X$.

Now we claim that $\tau:S\mapsto U\sm S$ is a bijection between 
$\mstab$ and $\mathcal X$. First note that 
indeed $\tau(S)\in\mathcal X$ for every maximal stable set: 
Set $A=U\cap S$ and $\mathcal B=\mathcal X\cap S$. 
If $U\subseteq S$ then $U\sm S=\emptyset\in\mathcal X$,
by assumption. So, assume $U\nsubseteq S$, which implies $\mathcal B\neq\emptyset$.
As $S$ is a maximal 
stable set, it follows that $U\sm S=U\sm A=N(\mathcal B)$. 
On the other hand, $N(\mathcal B)$ is just the union of the $X\in S\cap\mathcal X=\mathcal B$, 
which is by the union-closed property equal to a set $X'$ in $\mathcal X$.
To see that $\tau$ is injective note that, by Lemma~\ref{structureMSS}, $S$ is determined by 
$U\cap S$, which in turn determines
$U\sm S$. For surjectivity, consider $X\in\mathcal X$.
We set $A=U\sm N(X)$ and observe that $S=A\cup (\mathcal X\sm N(A))$ is a stable set. 
Moreover, as $X\in \mathcal X\sm N(A)$ every vertex in $U\sm A$ is a neighbour of $X\in S$,
which means that $S$ is maximal.

Now, assuming that Conjecture~\ref{mstabconj} is true, there is an 
\rickety\ $u\in U$, that is, it holds that
$|\mstab_u|\leq\frac{1}{2}|\mstab|$.
Clearly $\mstab$ is the disjoint union of
$\mstab_u$ and of $\mstab_{\overline u}$, so that 
\[
|\tau(\mstab_{\overline u})|=|\mstab_{\overline u}|\geq \frac{1}{2}|\mstab|=\frac{1}{2}|\mathcal X|.
\]
As $u\in \tau(S)\in\mathcal X$ for every $S\in \mstab_{\overline u}$, 
the union-closed sets conjecture follows.

\medskip
For the other direction, 
consider a bipartite graph with bipartition $U,W$
and at least one edge. 
Define $\mathcal X:=\{U\sm S:S\in\mstab\}$, and note that $\mathcal X\neq\{\emptyset\}$
as $G$ has at least two distinct maximal stable sets. 
By Lemma~\ref{structureMSS},
there is a bijection between $\mathcal X$ and $\mstab$. Moreover, it 
is a direct consequence of Lemma~\ref{MSScap} that $\mathcal X$ is union-closed. 
From this, it is straightforward 
that Conjecture~\ref{mstabconj} follows from the union-closed sets conjecture. 
\end{proof}

%
%
%

\section{Application to four graph classes}

For a set $X$ of vertices we define $\mstab_X$ to be the set of maximal stable sets
containing all of $X$. As before, we abbreviate $\mstab_{\{x\}}$ to $\mstab_x$.
\begin{lemma}\label{injection}
Let $x$ be a vertex of a bipartite graph $G$. 
Then there is an injection $\mstab_{N(x)}\to\mstab_x$.
\end{lemma}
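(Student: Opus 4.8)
The plan is to construct the injection explicitly by "adding $x$ and repairing". Take a maximal stable set $S\in\mstab_{N(x)}$, so $S\supseteq N(x)$ and hence $x\notin S$ (since $x$ has a neighbour in $S$, assuming $x$ has degree at least one; if $x$ is isolated the statement is trivial as then $\mstab_{N(x)}=\mstab=\mstab_x$). Work in the bipartition $U,W$ with $x\in U$, say, so that $N(x)\subseteq W$. The natural candidate is
\[
\varphi(S) \;=\; \bigl((U\cap S)\cup\{x\}\bigr)\;\cup\;\bigl(W\sm N((U\cap S)\cup\{x\})\bigr),
\]
i.e.\ put $x$ into the $U$-side, then take the induced maximal stable set via Lemma~\ref{structureMSS}. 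First I would check $\varphi(S)$ is well-defined and lies in $\mstab_x$: it is stable by construction of the $W$-part, it is maximal because every $U$-vertex outside has a neighbour in the $W$-part (a vertex $u\in U\sm(S\cup\{x\})$ already had a neighbour $w\in W\cap S$; one must argue $w$ survives, i.e.\ $w\notin N((U\cap S)\cup\{x\})$ — since $w\in S$ and $S$ is stable $w\notin N(U\cap S)$, and $w\notin N(x)$ because $w\in S\supseteq N(x)$ would put $w$ and its $U$-neighbour... here I need $w$ not adjacent to $x$, which holds since if $w\in N(x)$ then $w\in N(x)\subseteq S$, fine, but adjacency of $w$ to $x$ is exactly what we must exclude). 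Let me reorganise: the cleanest route is to note $W\cap S\supseteq N(x)$ already, and adding $x$ to the $U$-side only removes from the $W$-side the vertices of $N(x)$, all of which were in $S$; so $\varphi(S)\cap W = (W\cap S)\sm N(x)$ and $\varphi(S)\cap U=(U\cap S)\cup\{x\}$.

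The key point is injectivity. By Lemma~\ref{structureMSS} a maximal stable set is determined by its trace on $U$, so it suffices to show $S\mapsto (U\cap S)\cup\{x\}$ is injective on $\mstab_{N(x)}$, which is immediate since $U\cap S$ is recovered as $\varphi(S)\cap U$ minus $x$ — but one must be sure $x\notin U\cap S$, which we established at the outset. Hence $\varphi$ is injective, giving the desired injection $\mstab_{N(x)}\to\mstab_x$.

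The main obstacle is the maximality verification for $\varphi(S)$: one must confirm that no $U$-vertex other than $x$ loses all its neighbours when we delete $N(x)$ from the $W$-side. A $U$-vertex $u\neq x$ with $u\notin S$ had some neighbour $w\in W\cap S$; if its only such neighbours all lay in $N(x)$, then $u$ would be adjacent to a vertex of $N(x)$, and we would need another neighbour. The fix is that $u\notin S$ means, by maximality of $S$, $u$ has a neighbour in $S$, but this neighbour could a priori be in $N(x)$; so actually the statement as written requires a genuinely different argument — perhaps instead one keeps $x$ out and uses $\mstab_{N(x)}$ being "almost" $\mstab_x$ via a counting/parity bijection rather than an explicit map. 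I would first try the explicit map above and, if the maximality step genuinely fails, fall back on: for each $S\in\mstab_{N(x)}$, the set $S'=(S\sm\{$one carefully chosen $W$-vertex$\})\cup\{x\}$, choosing the $W$-vertex to restore maximality; making this choice canonical (hence the map injective) is then the crux.
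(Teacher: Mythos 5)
There is a genuine gap, and you have in fact put your finger on it yourself: the map $\varphi$ as you define it need not land in $\mstab_x$, because maximality can fail. Concretely, take $U=\{x,u\}$, $W=\{w\}$ with edges $xw$ and $uw$. For $S=\{w\}\in\mstab_{N(x)}$ your map gives $\varphi(S)=\{x\}$, which is not maximal (it misses $u$). The problem is exactly the one you isolate at the end: a vertex $u\in U$ at distance~$2$ from $x$ may have all of its $S$-neighbours inside $N(x)$, and once $N(x)$ is deleted from the $W$-side such a $u$ has no neighbour left in $\varphi(S)$, so it must be \emph{added} to restore maximality. Your fallback suggestion does not repair this: since $S\supseteq N(x)$ and every vertex of $N(x)$ is adjacent to $x$, accommodating $x$ forces you to delete \emph{all} of $N(x)$, not one carefully chosen $W$-vertex, and the issue of orphaned distance-$2$ vertices remains.

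The fix is to complete your map rather than abandon the approach. Writing $L_i$ for the set of vertices at distance $i$ from $x$, the paper sends $S$ to $(S\sm L_1)\cup\{x\}\cup(L_2\sm N(S\cap L_3))$: delete $N(x)=L_1$, insert $x$, and insert every distance-$2$ vertex that has no neighbour in $S\cap L_3$. These are precisely the orphaned vertices; the remaining $L_2$-vertices still have a neighbour in $S\cap L_3$, which survives into the image, so maximality holds, and stability is clear because the added $L_2$-vertices have all their neighbours in $L_1\cup L_3$. Injectivity then follows much as you argue: the image agrees with $S$ outside $L_1\cup L_2$, while on $L_1\cup L_2$ every member of $\mstab_{N(x)}$ looks the same ($L_1=N(x)\subseteq S$ and $S\cap L_2=\es$ since each $L_2$-vertex has a neighbour in $L_1$), so $i(S)=i(T)$ forces $S=T$.
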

\begin{proof}
We define 
\[
i:\mstab_{N(x)}\to\mstab_x,\, S\mapsto S\sm L_1\cup\{x\}\cup (L_2\sm N(S\cap L_3)),
\]
where $L_i$ denotes the set of vertices at distance~$i$ to $x$.
That $i(S)$ is stable and maximal is a direct consequence of the definition.
Moreover, $i(S)=i(T)$ for $S,T\in\mstab_{N(x)}$ implies that $S$ and $T$ 
are identical outside $L_1\cup L_2$.
Moreover, $S$ and $T$ are also identical on $L_1\cup L_2$: First, $L_1=N(x)$
shows that $L_1$ lies in both $S$ and $T$. Second, since every 
vertex in $L_2$ is a neighbour of one in $L_1\subseteq S\cap T$, 
no vertex of $L_2$ can lie in either of $S$ or $T$. 
Thus, $S=T$, and we see that $i$ is an 
injection.
\end{proof}

We denote by $N^2(x)=N(N(x))$ the second neighbourhood of a vertex $x$.
The following lemma generalises the observation that 
if a union-closed set contains a singleton then it satisfies the 
union-closed sets conjecture:
\begin{lemma}\label{onelem}
Let $x,y$ be two adjacent vertices in a bipartite graph $G$
with $N^2(x)\subseteq N(y)$. Then $y$ is \rickety.
\end{lemma}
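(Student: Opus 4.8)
The plan is to show $|\mstab_y| \le |\mstab_{\overline y}|$ by exhibiting an injection from the maximal stable sets containing $y$ into those not containing $y$. Fix the adjacent pair $x,y$ with $N^2(x) \subseteq N(y)$. The first observation is that if $S \in \mstab_y$, then since $x$ and $y$ are adjacent, $x \notin S$; by maximality $x$ has a neighbour in $S$, i.e. $S \cap N(x) \neq \emptyset$. I would like to map such an $S$ to a maximal stable set obtained by deleting $y$ from $S$ and repairing maximality by inserting vertices of $N(x)$ (together with whatever that forces). The hypothesis $N^2(x) \subseteq N(y)$ is exactly what makes this clean: every vertex of $N^2(x) = N(N(x))$ is adjacent to $y$, hence not in $S$, so adding all of $N(x)$ to $S \setminus \{y\}$ creates no conflict with $S$ outside the immediate neighbourhood of $y$ and $x$.

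Concretely, writing $L_i$ for the set of vertices at distance $i$ from $x$ (so $L_1 = N(x)$, and $L_2 \subseteq N^2(x) \subseteq N(y)$), I would define a candidate map
\[
j:\mstab_y \to \mstab_{\overline y},\quad S \mapsto \bigl(S \setminus (\{y\} \cup L_2)\bigr) \cup L_1 \cup \bigl(L_3 \setminus N(S \cap L_4)\bigr),
\]
analogous to the injection $i$ built in Lemma \ref{injection}. One checks that $j(S)$ is stable: $L_1$ is stable as $G$ is bipartite, $L_1$ has no neighbours among the surviving vertices of $S$ except possibly in $L_2$ which we removed, and $y$ is gone so the only vertex forcing $L_2$ out is accounted for. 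Maximality is restored on $L_3$ by the usual "$L_3 \setminus N(S\cap L_4)$" correction, and every removed vertex (namely $y$ and the vertices of $L_2 \cap S$) now has a neighbour in $L_1$ or is dominated as before; note $y \in N(x) \cup N^2(x)$-region is dominated by $L_1 = N(x) \ni$ a neighbour of $y$ since $x$ and $y$ are adjacent, so $x$... rather, $y$ has neighbour $x \notin j(S)$ but also $N(y) \cap L_1 \ne \emptyset$ need not hold — here I would instead argue $y$ is dominated because $y \in L_2$ is impossible (that would need $x$ at distance $2$ from itself), so $y \in L_1$, and then $y \in N^2(x)$? No: $y \in L_1$ means $y$ is adjacent to $x$, consistent, and $y$ being in $L_1$ but removed means we must check $y$ has a neighbour in $j(S)$; since $N(y) \supseteq N^2(x) \supseteq$ everything at distance $2$, and $y$ itself lies at distance $1$, $y$ has a neighbour at distance $0$ or $2$ from $x$, i.e. $x$ or something in $L_2 \subseteq N(y)$ — this is the delicate point. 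Injectivity then follows exactly as in Lemma \ref{injection}: two sets with the same image agree outside $L_1 \cup L_2 \cup \{y\}$, both miss $y$... rather both contain $y$ by assumption, both contain $L_1$, and both miss $L_2$, hence are equal.

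The main obstacle I anticipate is pinning down the maximality of $j(S)$ at the vertex $y$ and at vertices of $L_2$: I must verify that after removing $y$ and the $L_2$-part of $S$, every such vertex regains a neighbour inside $j(S)$, and this is precisely where $N^2(x) \subseteq N(y)$ does the work — it guarantees $L_2 \subseteq N(y)$, so $y$ together with $L_1$ "covers" the right region. A cleaner route, which I would actually prefer to write up, is to combine Lemma \ref{injection} with the trivial bound: Lemma \ref{injection} gives an injection $\mstab_{N(y)} \to \mstab_y$, but in fact under the hypothesis one can show $\mstab_{\overline y} \supseteq$ a set in bijection with $\mstab_y$ via a symmetric construction, so that $|\mstab_y| \le |\mstab_{\overline y}|$, i.e. $y$ is \rickety. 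Either way the bipartite structure (Lemmas \ref{structureMSS} and \ref{MSScap}) reduces all verifications to routine checks on the distance layers $L_1, L_2, L_3, L_4$ around $x$.
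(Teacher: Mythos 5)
Your proposal has a genuine gap, and the central construction is broken. You define
\[
j(S) = \bigl(S \setminus (\{y\} \cup L_2)\bigr) \cup L_1 \cup \bigl(L_3 \setminus N(S \cap L_4)\bigr)
\]
and want it to land in $\mstab_{\overline y}$. But $x$ and $y$ are adjacent, so $y \in N(x) = L_1$, and since you union $L_1$ back in, $j(S)$ always contains $y$. The map cannot possibly take values in $\mstab_{\overline y}$; you half-notice this difficulty in your discussion of "maximality at $y$" but never resolve it, and it is not a delicate point to be patched --- it is fatal to the map as written. Your fallback "cleaner route" invokes Lemma~\ref{injection} at the vertex $y$ (an injection $\mstab_{N(y)} \to \mstab_y$), which points in the wrong direction for bounding $|\mstab_y|$ from above, and the promised "symmetric construction" is never supplied.

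The idea you are missing is that the hypothesis $N^2(x) \subseteq N(y)$ forces much more than $S \cap N(x) \neq \emptyset$: for any $S \in \mstab_y$ one has $N(x) \subseteq S$ \emph{entirely}. Indeed, every neighbour of a vertex $v \in N(x)$ lies in $\{x\} \cup N^2(x) \subseteq \{x\} \cup N(y)$, and no vertex of $\{x\} \cup N(y)$ can belong to a stable set containing $y$; so if $v \notin S$ it would violate maximality. Hence $\mstab_y = \mstab_{N(x)}$ (the reverse inclusion is immediate since $y \in N(x)$). Now Lemma~\ref{injection}, applied at $x$, injects $\mstab_{N(x)}$ into $\mstab_x$, giving $|\mstab_y| \le |\mstab_x|$; and since $x$ and $y$ are adjacent, $\mstab_x$ and $\mstab_y$ are disjoint, so $2|\mstab_y| \le |\mstab_x| + |\mstab_y| \le |\mstab|$. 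This is exactly the paper's two-line argument. Note that the injection of Lemma~\ref{injection} removes all of $L_1$ (including $y$) and adds $x$ --- the opposite of what your map $j$ does --- which is why it correctly lands in $\mstab_x \subseteq \mstab_{\overline y}$.
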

\begin{proof}
From $N^2(x)\subseteq N(y)$ it follows that every maximal stable set containing $y$
must contain all of $N(x)$. Thus, $\mstab_y=\mstab_{N(x)}$, which means by Lemma~\ref{injection}
that $|\mstab_y|\leq|\mstab_x|$ and as 
$|\mstab_y|+|\mstab_x|\leq |\mstab|$ the lemma is proved.
\end{proof}

We now apply the lemma to the class of \emph{chordal bipartite} graphs.
This is the class of bipartite graphs in which every 
cycle with length at least six has a chord. 

\medskip
This graph class was originally defined in 1978 by Golumbic and Gross \cite{Gol}.
It is also known as the class of bipartite weakly chordal graphs.

A vertex $v$ in a bipartite graph 
is \emph{weakly simplicial} if the neighbourhoods of its neighbours
form a chain under inclusion.  
Hammer, Maffray and Preissmann~\cite{HMP89}, and also Pelsmajer, Tokaz and West~\cite{PTW04} prove the 
following:
\begin{theorem}\label{weaklysimpl}
A bipartite graph with at least one edge is chordal bipartite if and only if every induced subgraph 
has a weakly simplicial vertex. Moreover, 
such a vertex can be found in 
each of the two bipartition classes.
\end{theorem}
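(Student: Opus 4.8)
The plan is to prove the two implications separately; the ``only if'' direction is the substantial one. Since chordal bipartiteness is inherited by induced subgraphs, that direction reduces to the single statement: \emph{every chordal bipartite graph with an edge has a weakly simplicial vertex in each bipartition class} (edgeless induced subgraphs being trivial, as there every vertex vacuously qualifies). For the easy ``if'' direction I would argue by contraposition. If $G$ is bipartite but not chordal bipartite it contains, as an induced subgraph, a cycle $C$ of length $2k\ge 6$; there every vertex $v$ has precisely two neighbours $v^-$ and $v^+$, and since $2k\ge 6$ the only common element of $N_C(v^-)$ and $N_C(v^+)$ is $v$ itself, so these two sets are incomparable. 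Thus $C$ has no weakly simplicial vertex, and $G$ violates ``every induced subgraph has a weakly simplicial vertex.''

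For the main direction I would translate the problem to the bipartite adjacency matrix. Let $G$ have bipartition classes $U$ and $W$, and let $M$ be the $0/1$ matrix with rows indexed by $W$, columns by $U$, and $M_{wu}=1$ exactly when $wu\in E(G)$. Two classical facts are invoked: (i) $G$ is chordal bipartite if and only if $M$ is \emph{totally balanced} (Golumbic--Gross; Farber); and (ii) a totally balanced $0/1$ matrix admits an ordering of its rows and an ordering of its columns producing no submatrix equal to $\Gamma=\bigl(\begin{smallmatrix}1&1\\1&0\end{smallmatrix}\bigr)$ — any doubly lexical ordering works (Lubiw; Hoffman--Kolen--Sakarovitch; Anstee--Farber). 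Fix such a $\Gamma$-free ordering, writing $u_1<u_2<\dots$ for the columns and $w_1<w_2<\dots$ for the rows.

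Now I claim $u_1$, the vertex of the first column, is weakly simplicial. Let $w_i,w_j\in N(u_1)$ with $i<j$; I must show $N(w_i)$ and $N(w_j)$ are comparable. Take $u_\ell\in N(w_i)$. If $\ell=1$ then $u_\ell=u_1\in N(w_j)$. If $\ell>1$, then in rows $w_i<w_j$ and columns $u_1<u_\ell$ the entries at $(w_i,u_1)$, $(w_i,u_\ell)$ and $(w_j,u_1)$ are all $1$; $\Gamma$-freeness forbids a $0$ at $(w_j,u_\ell)$, hence $u_\ell\in N(w_j)$. So $N(w_i)\subseteq N(w_j)$, and the neighbourhoods of the neighbours of $u_1$ form a chain (linearly ordered by the row order): $u_1$ is weakly simplicial. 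Finally, $\Gamma$ is transpose-symmetric, so applying the very same argument to $M^{\top}$ (which is thus $\Gamma$-free, with the column order as its row order and vice versa) shows that $w_1$, the vertex of the first row, is a weakly simplicial vertex of $W$. Running this for every induced subgraph finishes the ``only if'' direction.

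The one genuine obstacle is importing the totally balanced / $\Gamma$-free machinery cleanly — namely fact (i) and the existence of a $\Gamma$-free ordering in fact (ii); once those are in hand the rest is the three-line deduction above together with the transpose remark. A more self-contained alternative would be to build a suitable vertex ordering of a chordal bipartite graph directly (a doubly lexical ordering ``by hand'') and verify $\Gamma$-freeness against the absence of long induced cycles, but this merely reproves known results, so citation is the sensible route.
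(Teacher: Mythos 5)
Your argument is correct in both directions: the contrapositive for the ``if'' part is fine (in an induced even cycle of length at least six every vertex has two degree-two neighbours whose neighbourhoods meet only in that vertex, so none is weakly simplicial), and the ``only if'' part is a valid three-line deduction once you grant the two imported facts, since in a $\Gamma$-free ordering the entries $(w_i,u_1)=(w_i,u_\ell)=(w_j,u_1)=1$ with $i<j$, $1<\ell$ do force $(w_j,u_\ell)=1$, giving $N(w_i)\subseteq N(w_j)$ and hence a chain; the transpose-symmetry of $\Gamma=\bigl(\begin{smallmatrix}1&1\\1&0\end{smallmatrix}\bigr)$ correctly yields the vertex in the other class. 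Note, however, that the paper does not prove this statement at all: it is quoted as a known theorem of Hammer--Maffray--Preissmann and of Pelsmajer--Tokaz--West, so your route is necessarily different from ``the paper's proof.'' Those original proofs work directly on the graph (inductive/structural arguments producing a weakly simplicial, or ``simple,'' vertex), whereas you detour through the matrix formulation, leaning on the equivalence of chordal bipartiteness with total balancedness and on the existence of doubly lexical, $\Gamma$-free orderings (Lubiw, Hoffman--Kolen--Sakarovitch, Anstee--Farber). That buys a very short and transparent local argument, at the cost of importing two nontrivial black boxes --- which is a defensible trade, given that the theorem is itself treated as a citation in the paper; a self-contained proof would instead have to reconstruct the doubly lexical ordering or argue by induction on the graph as in the cited papers. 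One cosmetic point: your proof yields a weakly simplicial vertex in each class that may be isolated (vacuously weakly simplicial), which matches the theorem as stated, but for the application in Theorem~\ref{chbipthm} one wants such a vertex with at least one neighbour; this is easily arranged (restrict to the non-isolated part, or observe that all-zero columns can be discarded before taking $u_1$), but it is worth saying explicitly.
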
 

Let us say that a bipartite graph \emph{satisfies Frankl's conjecture}
if each of its bipartition classes contains a \rickety\ vertex.
In order to avoid repeating the trivial condition that the graph has to contain 
at least one edge, we will also consider  edgeless graphs to
satisfy Frankl's conjecture.

\begin{theorem}\label{chbipthm}
Chordal bipartite graphs satisfy  Frankl's conjecture. 
\end{theorem}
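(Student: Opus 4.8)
The plan is to prove Theorem~\ref{chbipthm} by induction on the number of vertices, using the weakly simplicial vertex supplied by Theorem~\ref{weaklysimpl}. The base case of an edgeless graph holds by convention, so assume $G$ is chordal bipartite with bipartition $U,W$ and at least one edge. By Theorem~\ref{weaklysimpl} we may pick a weakly simplicial vertex $x$ in the class $U$; since every induced subgraph of a chordal bipartite graph is again chordal bipartite, we may also assume inductively that $G-x$ satisfies Frankl's conjecture. The goal is to produce a \rickety\ vertex in $U$ and one in $W$. For the class $W$ not containing $x$, the idea is that the \rickety\ vertex of $G-x$ in the class $W$ remains \rickety\ in $G$: deleting a vertex can only change the family of maximal stable sets in a controlled way, and one should check that the ratio $|\mstab_w|/|\mstab|$ does not increase past $\tfrac12$ when $x$ is reinstated. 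For the class $U$ containing $x$, we want to find a \rickety\ vertex directly in $G$, and this is where the weak simpliciality of $x$ enters.

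The key step is to exploit that the neighbourhoods of the neighbours of $x$ form a chain under inclusion. Let $y$ be a neighbour of $x$ whose neighbourhood $N(y)$ is \emph{maximal} in this chain; then $N(z)\subseteq N(y)$ for every $z\in N(x)$. I claim this forces $N^2(x)\subseteq N(y)$: indeed $N^2(x)=\bigcup_{z\in N(x)}N(z)\subseteq N(y)$ by the chain property. Now $x$ and $y$ are adjacent with $N^2(x)\subseteq N(y)$, so Lemma~\ref{onelem} applies and yields that $y$ is \rickety. Since $y\in N(x)\subseteq W$, this produces a \rickety\ vertex in $W$ rather than in $U$. To get one in $U$, I would instead apply the symmetric statement of Theorem~\ref{weaklysimpl} to choose the weakly simplicial vertex in the class $W$, run the same argument, and obtain a \rickety\ vertex in $U$. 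Combining the two applications — one weakly simplicial vertex in each bipartition class, each giving via Lemma~\ref{onelem} a \rickety\ neighbour in the opposite class — directly produces a \rickety\ vertex in each of $U$ and $W$, which is exactly Frankl's conjecture for $G$. This actually avoids the induction entirely: the weakly simplicial vertex of $G$ itself (not of a subgraph) suffices.

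So the streamlined plan is: (1) if $G$ is edgeless, we are done by convention; (2) otherwise apply Theorem~\ref{weaklysimpl} to get a weakly simplicial vertex $x_U\in U$ and, by the ``moreover'' clause, a weakly simplicial vertex $x_W\in W$; (3) among the neighbours of $x_U$, pick $y_W$ with $N(y_W)$ largest in the inclusion chain, note $N^2(x_U)=\bigcup_{z\in N(x_U)}N(z)\subseteq N(y_W)$, and invoke Lemma~\ref{onelem} to conclude $y_W$ is \rickety; $y_W\in W$; (4) symmetrically obtain a \rickety\ vertex $y_U\in U$ from $x_W$; (5) conclude. The main obstacle I anticipate is purely bookkeeping: verifying that the maximum of the chain $\{N(z):z\in N(x)\}$ really equals the union $N^2(x)$, which is immediate once one unwinds definitions, and making sure the weakly simplicial vertex indeed has at least one neighbour (true since $G$ has an edge and, in the relevant component, $x$ is incident to one — if $x$ happens to be isolated one simply picks another weakly simplicial vertex in a nontrivial component, or restricts attention to a component with an edge). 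No serious analytic or combinatorial difficulty remains; the weight of the argument sits entirely in the already-proven Lemma~\ref{onelem} and Theorem~\ref{weaklysimpl}.
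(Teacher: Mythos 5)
Your streamlined argument (steps (1)--(5)) is exactly the paper's proof: take a weakly simplicial vertex in each bipartition class, pick the neighbour whose neighbourhood is maximal in the inclusion chain, observe that it contains $N^2(x)$, and apply Lemma~\ref{onelem}. The inductive scaffolding you set up at the start is, as you yourself note, unnecessary, and the final version is correct.
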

\begin{proof}
For a given bipartition class, let $x$ be a weakly simplicial vertex in it. 
Among the neighbours of $x$ denote by $y$ the one whose neighbourhood includes 
the neighbourhoods of all other neighbours of $x$. Then $y$ is \rickety,
by Lemma~\ref{onelem}.
\end{proof}

Going beyond
chordal bipartite graphs, we quickly encounter graphs that cannot be 
handled anymore by Lemma~\ref{onelem}: No vertex in an even cycle 
of length at least six can be proved to be \rickety\ by applying Lemma~\ref{onelem}.
We will, therefore, strengthen the lemma  to at least cover
all even cycles. 

For this, let us extend our notation a bit.
For two vertices $u,v$ let us denote by $\mstab_{uv}$ the set of $S\in\mstab$
containing both of $u$ and $v$, by $\mstab_{u\overline v}$
the set of $S\in\mstab$
containing  $u$ and but not $v$, and by $\mstab_{\overline u\overline v}$
the set of $S\in\mstab$ containing neither of $u$ and $v$.

\begin{lemma}\label{twolem}
Let $G$ be a bipartite graph.
Let $y$ and $z$ be two neighbours of a vertex $x$ so that $N^2(x)\subseteq N(y)\cup N(z)$.
Then one of $y$ and $z$ is \rickety.
\end{lemma}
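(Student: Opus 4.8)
The plan is to mimic the proof of Lemma~\ref{onelem}, but now splitting the maximal stable sets containing $y$ or $z$ according to whether they also contain the other vertex. Write $L_i$ for the set of vertices at distance exactly $i$ from $x$, so $L_1=N(x)$ and $L_2=N^2(x)\setminus\{x\}$, and recall that $y,z\in L_1$. The key structural observation, exactly as in Lemma~\ref{onelem}, is that any maximal stable set $S$ containing a vertex $v\in L_1$ cannot meet $N(v)\supseteq\{x\}$, and conversely $x\notin S$ forces $S\cap L_1\neq\emptyset$. First I would establish the analogue of the identity $\mstab_y=\mstab_{N(x)}$ in the refined setting: since $N^2(x)\subseteq N(y)\cup N(z)$, a maximal stable set that contains \emph{both} $y$ and $z$ must avoid all of $N(y)\cup N(z)\supseteq L_2$, hence (being maximal) must contain all of $L_1$; so $\mstab_{yz}=\mstab_{L_1}$. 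Then, as in Lemma~\ref{onelem}, Lemma~\ref{injection} gives $|\mstab_{L_1}|\le|\mstab_x|$, i.e. $|\mstab_{yz}|\le|\mstab_x|$.

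Next I would handle the sets containing exactly one of $y,z$. By symmetry consider $S\in\mstab_{y\overline z}$. I claim there is an injection $\mstab_{y\overline z}\to\mstab_{\overline y z}\cup\mstab_x$, obtained by a local surgery on $L_1\cup L_2$ that swaps the roles of $y$ and $z$: delete $y$ from $S$, add $z$ (this is allowed since $S$ avoids $N(y)\supseteq N(z)\cap L_2$... this containment is not given, so care is needed — see below), and then re-close the set on $L_2$ by adding every vertex of $L_2$ with no neighbour in the modified $L_1$-part, finally re-closing on $L_1$ by adding $x$ if $L_1$ has become empty. One checks this yields a maximal stable set, that it lands in $\mstab_{\overline y z}\cup\mstab_x$, and — the crucial point — that the map is injective because the image determines $S$ outside $L_1\cup L_2$ and the $L_1$-part of $S$ is recovered from the $L_1$-part of the image by the inverse swap, whereupon the $L_2$-part of $S$ is forced by maximality and stability. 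Granting this, $|\mstab_{y\overline z}|\le|\mstab_{\overline y z}\cup\mstab_x|=|\mstab_{\overline y z}|+|\mstab_x|$ and symmetrically $|\mstab_{\overline y z}|\le|\mstab_{y\overline z}|+|\mstab_x|$.

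Finally I would assemble the counts. Suppose, for contradiction, that neither $y$ nor $z$ is \rickety, so $|\mstab_y|>\tfrac12|\mstab|$ and $|\mstab_z|>\tfrac12|\mstab|$, giving $|\mstab_{yz}|=|\mstab_y|+|\mstab_z|-|\mstab_y\cup\mstab_z|>|\mstab|-|\mstab|+\text{(positive)}$; more usefully, $|\mstab_{y\overline z}|+|\mstab_{yz}|>\tfrac12|\mstab|$ and $|\mstab_{\overline y z}|+|\mstab_{yz}|>\tfrac12|\mstab|$, whereas $\mstab_x$ is disjoint from $\mstab_y\cup\mstab_z$ (no maximal stable set contains $x$ together with a neighbour), so $|\mstab_x|\le|\mstab|-|\mstab_{y\overline z}|-|\mstab_{\overline y z}|-|\mstab_{yz}|$. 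Plugging the three inequalities $|\mstab_{yz}|\le|\mstab_x|$, $|\mstab_{y\overline z}|\le|\mstab_{\overline y z}|+|\mstab_x|$, $|\mstab_{\overline y z}|\le|\mstab_{y\overline z}|+|\mstab_x|$ into this bound forces $|\mstab_{y\overline z}|+|\mstab_{\overline y z}|+2|\mstab_{yz}|\le|\mstab|$, contradicting the two strict lower bounds above after adding them. I expect the main obstacle to be the second step: making the $y\leftrightarrow z$ surgery well defined and genuinely injective, since swapping $y$ for $z$ may be blocked when $S$ meets $N(z)\setminus N(y)$ inside $L_2$. The fix is to allow the image to absorb such $S$ into $\mstab_x$ instead (by emptying $L_1$), and to check that the two "exceptional" targets — ending up in $\mstab_{\overline y z}$ versus in $\mstab_x$ — are distinguished by whether $x$ lies in the image, so no collisions occur; this is the point that needs the most careful bookkeeping with $L_1,L_2,L_3$.
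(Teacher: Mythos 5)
Your proof is correct, and its load-bearing parts coincide with the paper's argument: the identity $\mstab_{yz}=\mstab_{N(x)}$ forced by $N^2(x)\subseteq N(y)\cup N(z)$, the bound $|\mstab_{yz}|\le|\mstab_x|$ from Lemma~\ref{injection}, and the containment $\mstab_x\subseteq\mstab_{\overline{y}\,\overline{z}}$. Note, however, that your final count already closes the argument from these facts alone: from $|\mstab_x|\le|\mstab|-|\mstab_{yz}|-|\mstab_{y\overline z}|-|\mstab_{\overline y z}|$ and $|\mstab_{yz}|\le|\mstab_x|$ you get $2|\mstab_{yz}|+|\mstab_{y\overline z}|+|\mstab_{\overline y z}|\le|\mstab|$, which contradicts the sum of the two assumed strict inequalities $|\mstab_y|>\tfrac12|\mstab|$ and $|\mstab_z|>\tfrac12|\mstab|$. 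Your entire second step --- the $y\leftrightarrow z$ surgery injection $\mstab_{y\overline z}\to\mstab_{\overline y z}\cup\mstab_x$ --- is never used in that derivation and should simply be deleted; you rightly flagged it as the shakiest part (the swap can indeed be blocked when $S$ meets $N(z)\setminus N(y)$, and the proposed repair would need real work), but fortunately nothing depends on it. The paper avoids the issue you were wrestling with by an up-front relabelling: assume without loss of generality that $|\mstab_{y\overline z}|\le|\mstab_{\overline y z}|$; then
$|\mstab_y|=|\mstab_{y\overline z}|+|\mstab_{yz}|\le|\mstab_{\overline y z}|+|\mstab_{\overline{y}\,\overline{z}}|=|\mstab_{\overline y}|$,
so $y$ is \rickety. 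That version is shorter, avoids the contradiction framing, and identifies which of the two vertices is \rickety, whereas your count only shows that one of them is.
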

\begin{proof}
We may assume that $|\mstab_{y\overline z}|\leq |\mstab_{\overline y z}|$.
Now, from $N^2(x)\subseteq N(y)\cup N(z)$ we deduce that $\mstab_{yz}=\mstab_{N(x)}$.
Thus, by Lemma~\ref{injection}, we obtain $|\mstab_{yz}|\leq |\mstab_{x}|$.
Since $\mstab_{x}\subseteq\mstab_{\overline y\overline z}$ it follows 
that 
$|\mstab_{y}|=|\mstab_{y\overline z}|+ |\mstab_{yz}|  \leq
|\mstab_{\overline y z}|+|\mstab_{\overline y\overline z}|=
 |\mstab_{\overline y}|$. 
As $|\mstab|=|\mstab_{y}|+ |\mstab_{\overline y}|$, we see that  $y$ is \rickety.
\end{proof}

Again, the lemma generalises a fact that is well known for the 
set formulation of the union-closed sets conjecture: If one of 
the sets in the union-closed set $\mathcal X$ contains exactly two elements 
then one of the two elements will lie in at least half of the members
of $\mathcal X$; see Sarvate and Renaud~\cite{SR89}.

\medskip
Next we give an application of Lemma~\ref{twolem} to a class of 
graphs derived from \emph{circular interval graphs}. 
The class of circular interval graphs plays a fundamental role 
in the structure theorem of claw-free graphs of Chudnovsky and Seymour~\cite{Sey}.
Circular interval graphs are defined as follows:
Let a finite subset of a circle be the vertex set, and 
for a given  set of subintervals of the circle consider 
two vertices  to be adjacent if there is an interval
containing them both. 
This class is equivalent to what is known as the proper circular arc graphs.

Circular interval graphs are not normally bipartite. 
The only exceptions are
 even cycles and disjoint unions of paths. 
Nevertheless, we may obtain a rich class of bipartite graphs 
from circular interval graphs: For any circular interval graph, 
partition its vertex set and 
delete every edge with both its endvertices in the same class. 
We call any graph arising in 
this manner a \emph{bipartitioned circular interval graph}.

\begin{figure}[ht]
\centering
\includegraphics[scale=0.8]{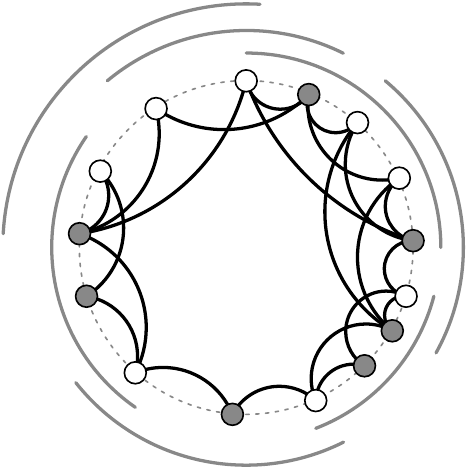}
\caption{A bipartitioned circular interval graph}\label{circint}
\end{figure}

\begin{theorem}
Bipartitioned circular interval graph satisfy Frankl's conjecture.
\end{theorem}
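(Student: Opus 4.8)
The strategy is to exhibit, in each bipartition class, a vertex that can be certified \rickety\ by Lemma~\ref{twolem}. Fix a bipartitioned circular interval graph $G$, arising from a circular interval graph $H$ on a vertex set placed on a circle, by deleting the edges inside each of the two classes $U,W$. Since adding edges only to a graph with $H$'s structure keeps the neighbourhoods ``interval-like'', the plan is to use the cyclic order on the vertices inherited from the circle and to look at an \emph{extremal} vertex $x$ in the chosen bipartition class, say the one whose interval of neighbours (read in the cyclic order) is, in a suitable sense, smallest or most to one side. First I would make precise the key geometric fact: for a vertex $v$ of $G$, its neighbourhood $N(v)$ consists of all vertices of the opposite class that lie in a circular arc around $v$; hence $N^2(x)=N(N(x))$ is again contained in a circular arc, bounded by the neighbourhoods of the two ``extreme'' neighbours of $x$ in the cyclic order.

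The main step is then the following. Let $x$ be a vertex in the chosen class, and let $y$ and $z$ be the first and last neighbours of $x$ encountered when scanning the cyclic order (so $N(x)$ sits in the arc from $y$ to $z$). I claim $N^2(x)\subseteq N(y)\cup N(z)$. Indeed, any vertex $w\in N^2(x)$ has a common neighbour $v\in N(x)$ with $x$; since $v$ lies in the arc between $y$ and $z$, and each of $N(y),N(v),N(z)$ is a circular arc of the opposite class containing a point close to $v$'s position, one checks that the arc $N(v)$ is covered by $N(y)\cup N(z)$ — the ``ends'' of $N(v)$ reach no further than the corresponding ends of $N(y)$ or $N(z)$ because $y$ and $z$ are the extreme neighbours of $x$ and the arcs vary monotonically with the base vertex. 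Making this monotonicity statement watertight — in particular dealing with the wrap-around of the circle and with the degenerate cases where $x$ has only one neighbour, or where $N(x)$ wraps almost all the way around — is where the real work lies; this is the step I expect to be the main obstacle. It will probably be cleanest to choose $x$ so that $N(x)$ is an arc that does \emph{not} wrap around a chosen basepoint of the circle, e.g. by first cutting the circle at a vertex of minimum degree or at a gap between consecutive vertices, so that all the relevant arcs become genuine intervals of $\R$ and the monotonicity is the usual ``intervals nested by endpoint order'' argument.

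Once $N^2(x)\subseteq N(y)\cup N(z)$ is established with $x,y,z$ as above and $y,z\in N(x)$ (so $y,z$ lie in the opposite bipartition class from $x$, and are adjacent to $x$), Lemma~\ref{twolem} immediately yields that one of $y,z$ is \rickety. Doing this for $x$ ranging appropriately, or rather once for a suitable extremal $x$ in $U$ and once for a suitable extremal $x$ in $W$, produces a \rickety\ vertex in the class opposite to $x$; swapping the roles of the two classes (every bipartitioned circular interval graph is still one after exchanging $U$ and $W$) then gives a \rickety\ vertex in each class. If $G$ has no edge there is nothing to prove by convention, so we may assume some vertex has a neighbour and the extremal choice makes sense. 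Finally I would remark that even cycles are the special case where every $N(v)$ has exactly two elements, recovering the earlier observation that Lemma~\ref{twolem} covers all even cycles, which is a useful sanity check on the argument.
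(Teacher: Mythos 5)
Your proposal follows essentially the same route as the paper: pick the two extreme neighbours $y,z$ of a non-isolated vertex $x$ in the circular order (after cutting the circle at a point not covered by the relevant intervals), verify $N^2(x)\subseteq N(y)\cup N(z)$ via the arc structure of neighbourhoods, and invoke Lemma~\ref{twolem}. The one point your sketch glosses over is the case where the chosen intervals around $x$ cover the whole circle, so that no cut point exists; but that case is immediate, since two of those intervals already cover the circle and the corresponding neighbours $y,z$ then dominate the entire opposite bipartition class.
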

\begin{proof}
Consider a bipartitioned circular interval graph defined by intervals $\mathcal I$,
and let $x$ be a non-isolated vertex of the graph. 

For every neighbour $u$ of $x$ 
we choose an interval $I_u\in\mathcal I$ containing both $x$ and $u$. 
If $\bigcup_{v\in N(x)}I_v$ covers the whole circle, then 
there are already two such intervals $I_y$ and $I_z$ that cover the circle. 
Clearly, every vertex not in the same bipartition class
as $y$ and $z$ is adjacent to at least one of them. 
In particular, $N^2(x)\subseteq N(y)\cup N(z)$. 

So, let us assume that there is a point $p$ on the circle that is not covered
by any $I_v$, $v\in N(x)$. We choose $y$ as the first
neighbour of $x$ from $p$ in clockwise direction, and $z$ as the first 
neighbour of $x$ from $p$ in counterclockwise direction. Then 
$y,v,z$ appear in clockwise order for every $v\in N(x)$
and $v'\in I_y\cup I_z$ 
for every vertex $v'$ so that $y,v',z$ appear in clockwise order.

Let us show that again $N^2(x)\subseteq N(y)\cup N(z)$. For this consider a $u\in N^2(x)$, 
and a neighbour $w$ of $x$ that is adjacent to $u$. Thus, there is a $J\in\mathcal I$
containing both $u$ and $w$. If $y,u,z$ appear in clockwise order, then $u\in I_y\cup I_z$, 
which implies $u\in N(y)\cup N(z)$. If not, then $J$ meets one 
of $y$ or $z$ as $y,w,z$ appear in clockwise order. 
Thus, by virtue of $J$, the vertex $u$ is adjacent to at least one of $y$ and $z$.

In both cases, we apply Lemma~\ref{twolem} in order to see that one of $y$ and $z$ is 
\rickety. As the choice of $x$ was arbitrary, we find \rickety\ vertices in 
both bipartition classes.
\end{proof}

%

Let us now turn to \emph{subcubic} bipartite graphs:
Bipartite graphs in which no vertex has a degree greater than~$3$.

\begin{theorem}\label{thm:MaxDeg3}
Subcubic bipartite graphs satisfy
Frankl's conjecture.
\end{theorem}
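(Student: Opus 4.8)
The plan is to find, in each bipartition class of a subcubic bipartite graph $G$, a vertex that is \rickety\ by locating a suitable pair $y,z$ to which Lemma~\ref{twolem} applies, or else handling the case directly. Fix a bipartition class and pick a non-isolated vertex $x$ in the other class, chosen with additional extremal properties (see below). Since $G$ is subcubic, $x$ has at most three neighbours, and every neighbour of $x$ has at most two further neighbours besides $x$. So $|N^2(x)| \le 6$, and more importantly $N^2(x)$ is covered by the at most three sets $N(u)\sm\{x\}$, $u\in N(x)$. If $x$ has degree~$1$ with neighbour $y$, then $N^2(x)\subseteq N(y)$ and Lemma~\ref{onelem} makes $y$ \rickety. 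If $x$ has degree~$2$ with neighbours $y,z$, then trivially $N^2(x)\subseteq N(y)\cup N(z)$ and Lemma~\ref{twolem} applies. The only genuinely problematic case is $\deg(x)=3$, say $N(x)=\{y_1,y_2,y_3\}$, where no two of the $N(y_i)$ need cover $N^2(x)$.

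**The key idea for the degree-three case** is to choose $x$ cleverly within its bipartition class so that some neighbour has low degree. Concretely: among all non-isolated vertices in the chosen class, pick $x$ so that the multiset of degrees of its neighbours is lexicographically smallest (or simply: pick $x$ adjacent to a vertex of the smallest possible degree in $G$ restricted to its class, then break ties). If some neighbour $y_i$ of $x$ has degree~$1$, then $N(y_i)=\{x\}$ contributes nothing to $N^2(x)$, so $N^2(x)\subseteq N(y_j)\cup N(y_k)$ for the other two neighbours and Lemma~\ref{twolem} finishes the job. If some neighbour $y_i$ has degree~$2$, write $N(y_i)=\{x,x'\}$. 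Then $N^2(x)\sm N(y_i) = N^2(x)\sm\{x'\}$ must be covered by $N(y_j)\cup N(y_k)$; whether this holds is exactly the obstruction, and one argues that if it fails, the vertex $x'$ (which lies in the same class as $x$ and has degree at most~$3$) has a neighbourhood structure contradicting the extremal choice of $x$, or one applies Lemma~\ref{twolem} to the pair $\{y_j,y_k\}$ after verifying that $x' \in N(y_j)\cup N(y_k)$ anyway because $x'$ has a second neighbour among $N(x)$ or else can be absorbed. If every neighbour of $x$ has degree exactly~$3$, then $G$ is $3$-regular in a neighbourhood of $x$, and one uses that $|N^2(x)|\le 6$ with each $|N(y_i)\sm\{x\}|=2$: a counting/pigeonhole argument shows the three $2$-element sets $N(y_i)\sm\{x\}$ either have two whose union is all of $N^2(x)$ (done via Lemma~\ref{twolem}) or they are pairwise disjoint, giving $|N^2(x)|=6$; in the latter highly structured situation one analyses the maximal stable sets of the resulting local configuration by hand.

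**The main obstacle** I expect is the fully $3$-regular local case where the three second-neighbourhoods $N(y_i)\sm\{x\}$ are pairwise disjoint. Here Lemma~\ref{twolem} is useless, and one must reason directly about maximal stable sets. The approach would be to compare $|\mstab_{y_i}|$ against $|\mstab_{y_j}|$ by a signed injection argument analogous to the proof of Lemma~\ref{twolem}: a maximal stable set containing $y_i$ excludes $x$ and excludes all of $N(y_i)$; by swapping $y_i$ out for $x$ (and adjusting on the second and third neighbourhoods, exactly as in Lemma~\ref{injection}) one maps $\mstab_{y_1y_2y_3}$ injectively into $\mstab_x$, and then a careful partition of $\mstab_{y_1}\cup\mstab_{y_2}\cup\mstab_{y_3}$ according to which of the $y_i$ are present, combined with $\mstab_x\subseteq\mstab_{\overline{y_1}\,\overline{y_2}\,\overline{y_3}}$, should force one $y_i$ to satisfy $|\mstab_{y_i}|\le\tfrac12|\mstab|$. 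Getting the inclusion–exclusion bookkeeping to close — ensuring the ``excess'' sets all inject into $\mstab_x$ with enough room — is the delicate point, and it may require exploiting subcubicity once more (each $y_i$ has only two forbidden second-neighbours, so the local independent-set polytope is small enough to enumerate).
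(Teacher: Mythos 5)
Your handling of the degree-$1$ and degree-$2$ cases via Lemmas~\ref{onelem} and~\ref{twolem} is correct and matches the easy part of the paper's argument, but the degree-$3$ case contains a genuine gap that is not a matter of ``delicate bookkeeping.'' The two subcases you defer are precisely the ones where no local covering argument can work. If every vertex of $W$ has degree~$3$ and every vertex of $U$ has degree~$2$ (the graph is then a subdivision of a cubic graph), then in a twin-free graph each $N(y_i)\sm\{x\}$ is a distinct singleton, so $|N^2(x)|=4$ while $|N(y_j)\cup N(y_k)|=3$ for every pair; Lemma~\ref{twolem} is \emph{never} applicable, no matter how $x$ is chosen. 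The paper needs the full strength of Knill's theorem (Theorem~\ref{Knill}, via Lemma~\ref{cor:Knill}) for exactly this case. Likewise, when some vertex of $U$ has degree~$3$, the paper invokes Vaughan's theorem (Theorem~\ref{Vaughan}); both external results are noted to have involved proofs. More fundamentally, your overall plan --- choose $x$ extremally and find the \rickety\ vertex among $N(x)$ --- cannot succeed: the left graph of Figure~\ref{twographs} is a subcubic bipartite graph with a degree-$3$ vertex $v$ none of whose neighbours is \rickety, so no choice of ``signed injection'' from $\mstab_{y_1y_2y_3}$ into $\mstab_x$ can force one of the $y_i$ to be \rickety\ in general. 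Your sketch for the $3$-regular case (``analyse by hand,'' ``should force'') is therefore not a proof, and there is no indication that it could be completed along these lines without importing something as strong as the Vaughan/Knill results.

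Two smaller points. First, your dichotomy for three $2$-element sets --- either two of them cover $N^2(x)\sm\{x\}$ or they are pairwise disjoint --- is false: two of the sets may share one element while the third is disjoint from both, giving $|N^2(x)\sm\{x\}|=5$, covered by no pair yet not pairwise disjoint. Second, several of your steps (distinctness of the $N(y_i)$, the singleton structure above) silently require twin-freeness; the paper secures this by first passing to a reduced induced subgraph via Lemma~\ref{reducedlem}, a reduction your proposal omits.
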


Our proof of Theorem~\ref{thm:MaxDeg3} needs some preparation.
Let us call a graph $G$ \emph{reduced} if there is no vertex $v$ whose neighbourhood is equal to the union of neighbourhoods 
of some other vertices.
In particular, reduced graphs are \emph{twin-free}, that is, no two vertices have identical neighbourhoods.
The following lemma tells us that we may restrict our attention to reduced bipartite graphs.

\begin{lemma}\label{reducedlem}
For any bipartite graph $G$ there is a reduced induced subgraph $G'$
so that $G$ satisfies Frankl's conjecture if $G'$ satisfies it.
\end{lemma}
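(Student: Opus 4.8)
The plan is to reduce everything to a single elementary operation: deleting one vertex whose neighbourhood is redundant. First I would prove a \textbf{one-step claim}: if $G$ is bipartite and $v$ is a vertex with $N(v)=\bigcup_{u\in A}N(u)$ for some set $A$ of vertices with $v\notin A$, then $G':=G-v$ has the same number of maximal stable sets as $G$, and for every vertex $z\ne v$ the number of maximal stable sets containing $z$ is unchanged; consequently $G$ satisfies Frankl's conjecture whenever $G'$ does. Granting this, the lemma follows by a minimality argument: among all induced subgraphs $H$ of $G$ for which the implication ``$H$ satisfies Frankl's conjecture $\Rightarrow$ $G$ satisfies it'' holds (a non-empty family, as it contains $G$), pick one, $G'$, with the fewest vertices; if $G'$ were not reduced it would have a vertex $v$ as above, and the one-step claim applied to $G'$, composed with the defining implication of $G'$, would put $G'-v$ into the family with fewer vertices — a contradiction. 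Hence $G'$ is reduced.

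The heart of the matter is the one-step claim, and the key observation driving it is: \emph{if $S$ is a maximal stable set of $G$ containing $v$, then $A\subseteq S$.} Indeed, were some $u\in A$ outside $S$, maximality of $S$ would give $u$ a neighbour $w\in S$, but $w\in N(u)\subseteq N(v)$ would make $w$ adjacent to $v\in S$, contradicting stability. Using this I would show that $f\colon S\mapsto S\setminus\{v\}$ is a bijection from $\mstab(G)$ to $\mstab(G')$. That $f(S)$ is again a maximal stable set of $G'$ uses the observation: a vertex $t\notin f(S)$ outside $S$ has a neighbour in $S$, and if its only such neighbour is $v$ then $v\in S$, so $A\subseteq S$, and since $t\in N(v)=\bigcup_{u\in A}N(u)$ some $u\in A\subseteq S\setminus\{v\}=f(S)$ is adjacent to $t$. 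Injectivity: two preimages can differ only in whether they contain $v$, and $v\in S_1\setminus S_2$ is impossible — it forces $A\subseteq S_1\setminus\{v\}\subseteq S_2$, yet $v\notin S_2$ maximal gives $v$ a neighbour $w\in S_2$ with some $u\in A\subseteq S_2$ adjacent to $w$, contradicting stability. Surjectivity: given $T\in\mstab(G')$, split on whether $v$ has a neighbour in $T$; the preimage is $T\cup\{v\}$ in the first case and $T$ in the second, and one checks each is maximal in $G$. Since membership of any $z\ne v$ is trivially preserved by $f$, this gives $|\mstab_z(G)|=|\mstab_z(G')|$ for all $z\ne v$, and the bijection gives $|\mstab(G)|=|\mstab(G')|$.

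Finally I would convert these equalities into the statement about Frankl's conjecture. Fix a bipartition $U,W$ of $G$; the vertices of $A$ with a neighbour lie in $U$, so we may take $v\in U$, and $G'$ has bipartition $U\setminus\{v\},W$. If $G'$ is edgeless, then $|\mstab(G)|=|\mstab(G')|=1$, which forces $G$ to be edgeless too (any edge $xy$ yields two distinct maximal stable sets, one avoiding $x$ and one avoiding $y$), so $G$ satisfies Frankl's conjecture by convention; otherwise $G'$ has a rare vertex in each of its two classes, and each such vertex is a vertex of $G$ lying in the corresponding class of $G$ and still rare there by the equalities above. Together with the minimality argument this proves the lemma.

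I do not expect a deep obstacle here; the part that needs care is the surjectivity/maximality bookkeeping in the one-step claim, and in particular the degenerate cases — $A=\emptyset$ (so $v$ is isolated) and $U\setminus\{v\}=\emptyset$ (so $G'$ has an empty side). Each of these must be checked not to break the bijection or the transfer of Frankl's conjecture; all of them collapse to ``$G$ or $G'$ is edgeless'' and are absorbed by the convention, so they are a matter of diligence rather than of ideas.
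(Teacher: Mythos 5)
Your proposal is correct and follows essentially the same route as the paper: delete a vertex $v$ with $N(v)=\bigcup_{u\in A}N(u)$, observe that every maximal stable set containing $v$ contains all of $A$, deduce that $S\mapsto S\setminus\{v\}$ is a membership-preserving bijection between the maximal stable sets of $G$ and of $G-v$ (so rare vertices of $G-v$ are rare in $G$), and iterate; your minimality formulation and the explicit treatment of the degenerate edgeless cases are just a more detailed write-up of the paper's one-paragraph argument.
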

\begin{proof}
Assume there are pairwise distinct vertices $u , v_1 , v_2, \ldots , v_k$ such that $N(u) = \bigcup_{i=1}^k N(v_i)$.
Then $\A_u = \A_{\{v_1 , v_2, \ldots , v_k\}}$. 
Thus, if $A$ is a maximal stable set of $G$, 
then $A-u$ is one of $G-u$, and conversely, any maximal stable 
set $A'$ of $G-u$ is already maximally stable in $G$ if $\{v_1 , v_2, \ldots , v_k\} \not \subseteq A'$; 
otherwise $A'+u$ is a maximal stable set of $G$. 
Hence, a rare vertex of $G-u$ is also rare in $G$.
The assertion is now obtained by iteratively
deleting vertices such as $u$  from $G$.
\end{proof}

Unlike the two classes above, subcubic 
graphs do not have an easily exploitable local structure.
In particular, Lemmas~\ref{onelem} and~\ref{twolem} will have only limited use.
Nevertheless, we can verify Frankl's conjecture by adapting two results 
on the set formulation of the union-closed sets conjecture 
into the graph setting. Both results, one of Vaughan and the other of Knill,
have surprisingly involved proofs.
For a union-closed set $\mathcal X$, we say that 
an element of $\bigcup\mathcal X$ is \emph{abundant} if the element appears
in at least half of the member-sets of $\mathcal X$.

\begin{theorem}[Vaughan~\cite{Vaug04}]\label{Vaughan}
Let $\mathcal X$ be a union-closed set containing three distinct sets of size~$3$
all of which have one element in common.
Then there is an abundant element in the union of the three sets.
\end{theorem}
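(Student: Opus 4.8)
The plan is to reduce the statement to a finite combinatorial problem on the subset lattice of the small ground set $Y:=A\cup B\cup C$, where $A,B,C$ are the three $3$-sets and $x$ is their common element. Write $A=\{x,a_1,a_2\}$, $B=\{x,b_1,b_2\}$, $C=\{x,c_1,c_2\}$; note $4\le|Y|\le 7$. First I would partition $\mathcal X$ according to the trace on $Y$: for $T\subseteq Y$ set $\mathcal X_T=\{Z\in\mathcal X:Z\cap Y=T\}$ and $m_T=|\mathcal X_T|$. Since $(Z\cup Z')\cap Y=(Z\cap Y)\cup(Z'\cap Y)$, the family $\mathcal T$ of \emph{occurring} traces (those $T$ with $m_T>0$) is itself union-closed, and it contains $A$, $B$, $C$ and hence $A\cup B$, $A\cup C$, $B\cup C$ and $Y$. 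For $e\in Y$ the number $d(e)$ of members of $\mathcal X$ containing $e$ equals $\sum_{T\ni e}m_T$, and $|\mathcal X|=\sum_T m_T$. Thus the theorem becomes: some $e\in Y$ satisfies $\sum_{T\ni e}m_T\ge\tfrac12\sum_T m_T$.

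The engine of the argument is a family of injections between cells. For $S\in\{A,B,C\}$ (each of which lies in $\mathcal X$ and is contained in $Y$) the map $Z\mapsto Z\cup S$ sends $\mathcal X_T$ into $\mathcal X_{T\cup S}$, and on a single cell it is injective: two sets of $\mathcal X_T$ agree on $Y\supseteq S$ by definition, and if their unions with $S$ agree they also agree off $S$, hence everywhere. This gives the monotonicity inequalities $m_T\le m_{T\cup A}$, $m_T\le m_{T\cup B}$, $m_T\le m_{T\cup C}$ for every $T\subseteq Y$. I would then recast ``$e$ is abundant'' as the existence of an injection $\psi$ from the $e$-free occurring traces to the $e$-containing occurring traces with $m_T\le m_{\psi(T)}$: summing this inequality over the domain and using injectivity yields $\sum_{T\not\ni e}m_T\le d(e)$, i.e.\ $d(e)\ge\tfrac12|\mathcal X|$. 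A natural way to produce such a $\psi$ is a saturating matching in the bipartite ``shift graph'' joining each $e$-free trace $T$ to those of $T\cup A,T\cup B,T\cup C$ that contain $e$ (all of which occur, $\mathcal T$ being union-closed); every matching edge automatically satisfies $m_T\le m_{\psi(T)}$, so only combinatorial injectivity, i.e.\ Hall's condition, must be checked, not the multiplicities themselves.

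It remains to carry this out, and here a case analysis on the intersection pattern of $A,B,C$ (equivalently on $|Y|$ and on which pairs share a second element beyond $x$) seems unavoidable. The element $x$ is the obvious first candidate, since all three of $\phi_A,\phi_B,\phi_C$ add $x$ and so give each $x$-free trace three outgoing shift-edges; I would try to verify Hall's condition for $x$ using these three shifts, chaining them (e.g.\ $m_T\le m_{T\cup A}\le m_{T\cup A\cup B}$) where a single shift does not land in a free target. When the condition for $x$ fails, the analysis would switch to a coordinate $a_i,b_i,c_i$, using that the three sets are \emph{distinct} and each of size exactly $3$ to guarantee enough incident shifts. It is essential that all three sets be present: a single $3$-set does not force abundance, and already for $\mathcal T=\{\emptyset,\{x\},A\}$ the size-average $\sum_T|T|\,m_T$ falls below $\tfrac12|Y|\sum_T m_T$, so a naive averaging of $|T|$ over traces cannot be the mechanism, and $\phi_A,\phi_B,\phi_C$ must be used together.

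The hard part, I expect, is exactly the tight end of this case analysis. In the heavily overlapping configurations (smallest case $|Y|=4$, where $A,B,C$ are the three $x$-containing $3$-subsets of a $4$-set) there are few independent shift constraints, while in the almost-disjoint configuration ($|Y|=7$) no single element need dominate, so both the target element and the matching must be chosen with care and may require replacing the integral matching by a weighted (fractional) domination argument: exhibiting nonnegative weights $\lambda_e$ on a suitable subset of $Y$ with $\sum_e\lambda_e\,d(e)\ge\tfrac12\bigl(\sum_e\lambda_e\bigr)|\mathcal X|$ forced by the monotonicity inequalities through LP-duality. Pinning down such weights in every configuration is presumably the source of the ``surprisingly involved'' proof, and is where the bulk of the work would lie.
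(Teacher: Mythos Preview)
The paper does not prove this theorem at all: it is quoted as an external result of Vaughan~\cite{Vaug04} and used as a black box in the proof of Theorem~\ref{thm:MaxDeg3}. There is therefore no ``paper's own proof'' to compare against.

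As for your proposal itself, it is not a proof but a programme. The trace decomposition and the monotonicity inequalities $m_T\le m_{T\cup S}$ for $S\in\{A,B,C\}$ are correct and standard, and your reformulation via a saturating matching (or a fractional/LP version of it) is a reasonable line of attack. But you explicitly stop before the case analysis and say that ``pinning down such weights in every configuration \ldots\ is where the bulk of the work would lie''. That bulk is the entire content of the theorem; what you have written is scaffolding. In particular, you have not shown that Hall's condition holds for any element in any configuration, nor produced the weights you allude to, nor even indicated why the presence of \emph{three} distinct $3$-sets with a common point is exactly what makes the matching go through when two such sets do not suffice. Until that is done, there is no proof here, only a plausible strategy.
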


While Vaughan's theorem gives a local condition, not unlike Lemmas~\ref{onelem}
and~\ref{twolem}, when a particular union-closed set satisfies the conjecture, 
the following result of Knill treats a special class of union-closed sets, 
which he calls \emph{graph-generated families}. 
In this context, we view edges of a graph $H$ as subsets of $V(H)$ of size two.

\begin{theorem}[Knill~\cite{Kni94}]\label{Knill}
Given a graph $H$ with at least one edge, let $\B =\{\bigcup F : F \subseteq E(H) \}$.
Then there is an edge $e \in E(H)$ such that $|\{S \in \B : e \subseteq S\}| \le \tfrac{|\B|}{2}$.
\end{theorem}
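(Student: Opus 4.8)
Write $\B_e:=\{S\in\B:e\subseteq S\}$; the goal is to find an edge $e$ with $|\B_e|\le\tfrac12|\B|$. The plan is to reduce to connected $H$, dispose of graphs having a vertex of degree one by an explicit injection, and handle the remaining case — minimum degree at least two — by induction on $|E(H)|$. For the reduction: isolated vertices may be deleted without changing $\B$, and if $H$ is the disjoint union of $H_1$ and $H_2$ (each with an edge) then every member of $\B$ is uniquely $S_1\cup S_2$ with $S_i\in\B(H_i)$, so for $e\in E(H_1)$ we get $|\B_e|/|\B|=|\{S\in\B(H_1):e\subseteq S\}|/|\B(H_1)|$; hence a good edge of a component is good in $H$, and we may assume $H$ connected.

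Suppose now $H$ has a vertex $v$ of degree one, with unique neighbour $u$. I claim $e=uv$ works. Define $\psi:\B_{uv}\to\B$ by $\psi(S)=S\sm\{v\}$ if $u$ has a neighbour in $S$ other than $v$, and $\psi(S)=S\sm\{u,v\}$ otherwise. Since $v$ is a leaf, removing $v$ from a member of $\B$ can leave a vertex uncovered only at $u$; hence in the first clause $\psi(S)$ is again a union of edges contained in it, and in the second clause we have $N(u)\cap S=\{v\}$, so no other vertex of $S$ is adjacent to $u$ and $S\sm\{u,v\}$ is likewise a union of edges contained in it. Thus $\psi$ maps into $\B$; its image misses $\B_{uv}$ because $v\notin\psi(S)$; and $\psi$ is injective, as the two clauses have disjoint images ($u\in\psi(S)$ in the first, $u\notin\psi(S)$ in the second) while each clause is visibly invertible. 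Therefore $|\B_{uv}|\le|\B\sm\B_{uv}|$, i.e.\ $|\B_{uv}|\le\tfrac12|\B|$.

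The substantial case is $\delta(H)\ge2$, and this is where I expect the real work to lie. I would induct on $|E(H)|$: choose an edge $e_0=xy$, set $H'=H-e_0$, and compare $\B=\B(H)$ with $\B'=\B(H')\subseteq\B$. The difference $\B\sm\B'$ is precisely the family of $S\in\B$ with $x,y\in S$ such that $N(x)\cap S=\{y\}$ or $N(y)\cap S=\{x\}$ — a set whose members have a very rigid local shape around $e_0$. By induction $H'$ has an edge $f$ with $|\{S\in\B':f\subseteq S\}|\le\tfrac12|\B'|$, so it would suffice to exhibit an edge (namely $f$, or some other carefully chosen one) that is also "good within $\B\sm\B'$", since adding the two inequalities then gives $|\B_f|\le\tfrac12|\B|$. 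The obstacle — and presumably the reason Knill's argument is ``surprisingly involved'' — is that $\B\sm\B'$ is not itself a graph-generated family, so one cannot simply recurse: one must use its explicit description and choose $e_0$ (or the vertex to peel off first) so that this auxiliary family is small or well-structured relative to $\B'$. Let me also record that a naive global averaging cannot replace this structural step: the inequality $\sum_{e\in E(H)}|\B_e|\le\tfrac12|E(H)|\,|\B|$ is false, e.g.\ for the tree obtained from an edge $xy$ by attaching three pendant leaves to each of $x$ and $y$, where $|\B|=79$, $|E(H)|=7$, yet $\sum_e|\B_e|=280>276.5=\tfrac12|E(H)|\,|\B|$. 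So a good edge is genuinely a minority phenomenon and must be located structurally rather than on average.
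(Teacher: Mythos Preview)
The paper does not prove this theorem at all: it is quoted from Knill~\cite{Kni94} and used as a black box (the paper even remarks that Knill's proof is ``surprisingly involved''). So there is no in-paper proof to compare your attempt against.

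As for the attempt itself: your reduction to connected $H$ is fine, and your leaf argument is correct and clean. The characterisation $T\in\B$ iff $H[T]$ has no isolated vertex makes the verification of $\psi(S)\in\B$ transparent, and injectivity is as you say.

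The genuine gap is exactly where you flag it: the case $\delta(H)\ge 2$. What you have written there is a plan, not a proof. Deleting an edge $e_0$ and applying induction to $H'=H-e_0$ gives an edge $f$ good for $\B'$, but you then need $f$ (or some replacement) to also be good on $\B\sm\B'$, and you do not establish this; you only note that $\B\sm\B'$ is not graph-generated and that one ``must use its explicit description''. That is the whole difficulty, and it is not addressed. Your observation that global averaging fails is a useful sanity check but does not advance the argument. In short: two of the three cases are done, the third --- the substantive one --- is not, so the proposal is incomplete as a proof of Knill's theorem.
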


Probably unaware of Knill's result, it was restated as a conjecture by El-Zahar~\cite{EZ97}.
Finally, as a response to El-Zahar's paper, it was reproven by Llano, Montellano-Ballesteros, Rivera-Campo and Strausz~\cite{LMRS08}.

We first translate Knill's theorem to the graph setting:

\begin{lemma}\label{cor:Knill}
Let $G$ be a twin-free bipartite graph with bipartition $U \cup W$, 
where every vertex in $U$ is of degree~$2$.
Then there is a \rickety~vertex in $U$.
\end{lemma}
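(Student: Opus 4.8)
The plan is to derive Lemma~\ref{cor:Knill} from Knill's theorem (Theorem~\ref{Knill}) by building a graph $H$ whose vertex set is $W$ and whose edges correspond, roughly, to the vertices of $U$. More precisely, since every $u\in U$ has exactly two neighbours in $W$, each $u$ naturally yields an edge $e_u=N(u)$ of a multigraph on $W$; because $G$ is twin-free, distinct vertices of $U$ give distinct edges, so $H:=(W,\{N(u):u\in U\})$ is a genuine (simple) graph with at least one edge. Knill's family is then $\B=\{\bigcup F:F\subseteq E(H)\}$, the set of all unions of sets $N(u)$.

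\textbf{Key steps.} First I would set up the correspondence between maximal stable sets of $G$ and members of $\B$. By Lemma~\ref{structureMSS}, a maximal stable set $S$ is determined by $U\cap S$, and $W\cap S=W\sm N(U\cap S)$; complementing inside $W$, the set $W\sm S=N(U\cap S)=\bigcup_{u\in U\cap S}N(u)$ is exactly a union of edge-sets of $H$, hence a member of $\B$. I would argue the map $S\mapsto W\sm S$ is a bijection $\mstab\to\B$: injectivity follows because from $W\sm S$ we recover $W\cap S$, which by the maximality/structure of $S$ recovers $U\cap S=\{u\in U: N(u)\subseteq W\sm S\}$ and hence $S$ (one must check that $U\cap S$ is precisely the set of $u$ with $N(u)\subseteq W\sm S$, using that $S$ is a maximal stable set); surjectivity follows because for any $F\subseteq E(H)$, writing $A=\{u\in U:N(u)\subseteq\bigcup F\}$, the set $A\cup(W\sm\bigcup F)$ is a maximal stable set mapping to $\bigcup F=\bigcup_{u\in A}N(u)$. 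Next, under this bijection, for a vertex $w\in W$ and the maximal stable sets $S$, we have $w\notin S$ iff $w\in W\sm S$, so $|\mstab_{\overline w}|=|\{T\in\B: w\in T\}|$. Now apply Theorem~\ref{Knill} to $H$: there is an edge $e=N(u_0)$ of $H$ with $|\{T\in\B:e\subseteq T\}|\le\frac12|\B|$. Translating back, $\{S\in\mstab: N(u_0)\subseteq W\sm S\}=\{S\in\mstab: u_0\in S\}=\mstab_{u_0}$, so $|\mstab_{u_0}|\le\frac12|\mstab|$, i.e.\ $u_0\in U$ is rare. That is exactly the conclusion.

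\textbf{Main obstacle.} The routine part is the bijection bookkeeping; the one spot needing care is verifying that $U\cap S=\{u\in U:N(u)\subseteq W\sm S\}$ for a maximal stable set $S$ — the inclusion $\subseteq$ is immediate from stability, while $\supseteq$ uses maximality (if $N(u)\cap S=\es$ then $u$ could be added, contradicting maximality unless $u\in S$). I also need twin-freeness precisely to ensure the edges $N(u)$, $u\in U$, are pairwise distinct so that $E(H)$ has the right cardinality and the correspondence between $U$ and $E(H)$ is a bijection rather than merely a surjection; without it, the count $|\mstab|=|\B|$ could fail. A minor edge case: if $G$ is edgeless then $U$ has no degree-$2$ vertices and the statement is vacuous, so we may assume $G$ has an edge, giving $E(H)\ne\es$ as required by Theorem~\ref{Knill}.
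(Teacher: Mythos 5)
Your proposal is correct and follows essentially the same route as the paper: you form the graph $H$ on $W$ whose edges are the neighbourhoods $N(u)$ for $u\in U$ (simple because $G$ is twin-free), set up the bijection between $\mstab$ and Knill's family $\B=\{\bigcup F: F\subseteq E(H)\}$ via $S\mapsto W\sm S$, identify $|\{T\in\B: N(u)\subseteq T\}|$ with $|\mstab_u|$, and invoke Theorem~\ref{Knill}. The only cosmetic difference is that the paper runs the bijection in the opposite direction, from $\B$ to $\mstab$; your verification of injectivity/surjectivity and of the edge--vertex count is exactly the content of the paper's argument.
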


\begin{proof}
Again, let $\A$ be the set of maximal stable sets of $G$.
Observe that $G$ is the subdivision of the graph $H$ on vertex set $W$, 
where any two distinct vertices $x,y$ of $H$ are adjacent 
if and only if they have a common neighbor $u\in U$ in $G$.
As $G$ is twin-free, 
every edge $e=xy$ of $H$ corresponds to a unique vertex $u_e \in U$ with $N(u_e)=\{x,y\}$. 

Let $\B =\{\bigcup F : F \subseteq E(H) \}$, and note that $\B=\{N_G(U'):U'\subseteq U\}$.
We will establish a bijection between $\B$ and $\A$.
For this, 
denote by $\A_{\cap W}$ the intersections of maximal stable sets of $G$ with $W$. 
Then we define the mapping $\B\to\A_{\cap W}$ by $N_G(U')\mapsto W\sm N_G(U')$,
for $U'\subseteq U$. 
As $(W\sm N_G(U'))\cup (U\sm N_G(W\sm N_G(U')))$ is a maximal stable set, the mapping is 
a bijection. 
Recall that Lemma~\ref{structureMSS} asserts
that every maximal stable set is determined by its intersection with one of the bipartition 
classes. Thus, the bijection $\B\to\A_{\cap W}$ extends to a bijection $\B\to\A$.
In particular, $|\A|=|\B|$.

Now, for any $S \in B$ there exists $U' \subseteq U$ so that $N_G(U')=S$.
Any edge $e\in E(H)$ between vertices $x,y\in W$ is contained in $S$ if and only if $x,y \notin W \ N_G(U')$, which means that
the unique maximal stable set $A\in\mathcal A$ with $A\cap W=W\sm N_G(U')$
needs to contain $u_e$, the vertex in $U$ with neighbours $x,y$. 
Therefore, the number of $S\in\B$ with $e\subseteq S$
is equal to the number of maximal stable sets containing $u_e$. 

Applying Theorem~\ref{Knill} we obtain an edge $e=xy \in E(H)$ such that $|\{S \in \B : \{x,y\} \subseteq S\}| \le \tfrac{|\B|}{2}$. This then implies that $u_e$ lies in at most
$\tfrac{|\B|}{2}=\tfrac{|\A|}{2}$ maximal stable sets, which completes the proof.
\end{proof}


\begin{proof}[Proof of Theorem~\ref{thm:MaxDeg3}.]
Let $G$ be a subcubic bipartite graph with bipartition $U \cup W$, 
and let $\A$ be the set of maximal stable sets of $G$.
By Lemma~\ref{reducedlem}, we may assume that $G$ is reduced and, in particular, twin-free.

Let us prove that there is a \rickety~vertex in $U$.
Then, by symmetry, we know that there must be a \rickety~vertex in $W$ too.
If $W$ contains a vertex of degree~$1$ or~$2$, we are done by Lemma~\ref{twolem}.
So, let us assume that every vertex in $W$ has degree~$3$.

First assume that there is a vertex $u \in U$ of degree~$1$.
Let $x\in W$ be its unique neighbor, and let $y,z\in U$ be the other two neighbors of $x$. 
By Lemma~\ref{twolem}, $y$ or $z$ is \rickety~and we are done.

Now assume that there is a vertex $u \in U$ of degree~$3$, say $N(u) = \{x,y,z\}$.
Consider the  set $\B= \{U \setminus S : S \in \A\}$, which is union-closed
by Lemma~\ref{MSScap}.
Then $N(x), N(y), N(z) \in \B$, and $u \in N(x) \cap N(y) \cap N(z)$.
Note that $N(x), N(y), N(z)$ are three distinct sets as $G$ is twin-free.
From Theorem~\ref{Vaughan} we know that there is an abundant element 
of $\B$ in $N(x) \cup N(y) \cup N(z)$, and hence this is a \rickety~vertex in $U$.

The remaining case, when every vertex in $U$ is of degree~$2$ is taken care of 
by Lemma~\ref{cor:Knill}.
\end{proof}

Recall that a graph is called \emph{series-parallel} if it does not contain $K_4$ as a minor.
Equivalently, a graph is series-parallel if and only if it is of treewidth at most two.
Reusing some of the tools presented above,
we can settle Frankl's conjecture for bipartite series-parallel graphs.

\begin{theorem}\label{thm:series-parallel}
Bipartite series-parallel graphs
satisfy Frankl's conjecture.
\end{theorem}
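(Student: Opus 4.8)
The plan follows the pattern of the earlier classes — reduce, then locate rare vertices — with the real content isolated in a structural claim about series-parallel graphs.

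By Lemma~\ref{reducedlem} and a component-by-component argument (a \rickety\ vertex of a component that carries an edge is \rickety\ in the whole graph, by the product formula for numbers of maximal stable sets), we may assume that $G$ is connected, reduced, bipartite series-parallel, with bipartition $U,W$ and at least one edge; in particular $G$ then has minimum degree at least~$1$. The statement I would now isolate and prove is: \emph{every such $G$ has a vertex of degree at most two in $U$ and a vertex of degree at most two in $W$.} Granting this, the proof finishes at once: applying Lemma~\ref{onelem} (if the degree equals~$1$) or Lemma~\ref{twolem} (if it equals~$2$) to a degree-$\le 2$ vertex of $W$ together with its one or two neighbours yields a \rickety\ vertex among those neighbours, all of which lie in $U$; symmetrically a degree-$\le 2$ vertex of $U$ gives a \rickety\ vertex of $W$. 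Hence $G$ satisfies Frankl's conjecture, and by Lemma~\ref{reducedlem} so does the original graph.

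Everything thus reduces to the displayed claim, which is where the work lies. Since series-parallel graphs have treewidth at most two they are $2$-degenerate, so $G$ certainly has a vertex of degree at most two; the point is that in a \emph{reduced} graph such a vertex cannot be forced entirely onto one side. Suppose for contradiction that every vertex of $W$ has degree at least three, so that every vertex of degree $\le 2$ lies in $U$. The intuition is that reducedness forbids the degenerate ways for $W$-vertices to have large degree: if the neighbourhoods of the (at least three) neighbours in $U$ of some $w\in W$ were too aligned, two of those neighbours would be twins, or $N(w)$ would be the union of neighbourhoods of other $W$-vertices, either way contradicting that $G$ is reduced; whereas if they are sufficiently spread out, one can assemble a subdivision of $K_4$ among them, contradicting that $G$ is series-parallel. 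Making this precise — presumably via a minimal counterexample, dissecting a vertex of degree at most two in $U$ against the $2$-tree (width-two tree-decomposition) structure of $G$, or by running through the series/parallel construction of each block — is the main obstacle. Note that in this regime Lemmas~\ref{onelem} and~\ref{twolem} are of no immediate help, since a vertex of $W$ of degree at least three need not have two neighbours whose neighbourhoods cover $N^2(w)$; and, in contrast with subcubic graphs, $U$ may contain vertices of arbitrarily large degree, so neither Vaughan's Theorem~\ref{Vaughan} nor the Knill-type Lemma~\ref{cor:Knill} applies off the shelf. One really has to use how series-parallelity limits the overlaps among the neighbourhoods of $W$.
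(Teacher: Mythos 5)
Your overall plan (reduce with Lemma~\ref{reducedlem}, locate a vertex of small degree, finish with Lemmas~\ref{onelem} and~\ref{twolem}) is the same skeleton as the paper's argument, but there is a genuine gap, and it is not only that the central structural claim is left unproved (you explicitly defer it): the claim is false as stated. Consider the graph obtained from two vertices $a,b$ joined by three internally disjoint paths $a\,u_{i,1}\,w_i\,u_{i,2}\,b$ ($i=1,2,3$) of length four, with a pendant vertex $\ell_i$ attached to each $w_i$. This graph is connected, bipartite with classes $W=\{a,b,w_1,w_2,w_3\}$ and $U=\{u_{1,1},\dots,u_{3,2},\ell_1,\ell_2,\ell_3\}$, and series-parallel (a theta graph with pendant vertices). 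It is also reduced: $N(u_{i,1})=\{a,w_i\}$ cannot be a union of other neighbourhoods, since no other vertex has its neighbourhood contained in $\{a,w_i\}$ and containing $a$ (similarly for the $u_{i,2}$ and the $\ell_i$); and on the other side each $N(w_i)$ contains the private leaf $\ell_i$, while neither $N(a)$ nor $N(b)$ contains any other vertex's neighbourhood as a subset. Yet every vertex of $W$ has degree exactly three, so your claim that in a connected reduced bipartite series-parallel graph each bipartition class contains a vertex of degree at most two fails, and with it your only route to a rare vertex in $U$. The intuition you sketch (reducedness versus a $K_4$-subdivision) cannot be completed, precisely because pendant leaves can inflate all degrees on one side without violating either property.

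The paper's proof differs at exactly this point: degrees are counted after deleting leaves. If $W$ contains a degree-one vertex one is done by Lemma~\ref{onelem}; otherwise all leaves lie in $U$, and in $G'=G-L$ every $U$-vertex still has degree at least two, so the structural lemma of Juvan, Mohar and Thomas (Lemma~\ref{lem:SP-configs}) forces a vertex $x\in W$ of degree at most two \emph{in $G'$}: configuration~(d) is excluded by bipartiteness, configuration~(b) with both twins in $U$ is excluded by reducedness, and configurations~(a) and~(c) land in $W$ because $U$-vertices of $G'$ have degree at least two. Crucially, a leaf neighbour $\ell$ of $x$ in $G$ contributes only $\{x\}\subseteq N(y)$ to $N^2(x)$, so Lemma~\ref{twolem} (or~\ref{onelem}) still applies in $G$ with the at most two non-leaf neighbours $y,z\in U$ of $x$, even though $x$ may have larger degree in $G$ --- in the example above, $w_i$ has degree three in $G$ but $N^2(w_i)\subseteq N(u_{i,1})\cup N(u_{i,2})$. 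So to repair your proof you must both weaken the claim to ``degree at most two after removing leaves'' (equivalently, the existence of $x\in W$ with at most two neighbours covering $N^2(x)$) and supply the structural input, which the paper takes from Lemma~\ref{lem:SP-configs} rather than proving from scratch.
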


The following lemma  gives us enough 
information on the local structure of a series-parallel graph to prove the  theorem with Lemmas~\ref{onelem} and~\ref{twolem}.

\begin{lemma}[Juvan, Mohar and Thomas~\cite{JMT99}]\label{lem:SP-configs}
Every non-empty series-parallel graph $G$ has one of the following:

\begin{enumerate}[\rm (a)]
	\item  a vertex of degree at most one,\label{cond:deg1}
	
	\item two twins of degree two,\label{cond:twins}
	
	\item two distinct vertices $u,v$ and two not necessarily distinct vertices $w,z \in V(G) \setminus \{u,v\}$ such that $N(v)=\{u,w\}$ and $N(u) \subseteq \{v,w,z\}$, or\label{cond:path}

	\item five distinct vertices $v_1, v_2, u_1, u_2, w$ such that $N(w) = \{u_1, u_2, v_1, v_2\}$ and $N(v_i) = \{w,u_i\}$ for $i=1,2$.\label{cond:triangles}
\end{enumerate}
\end{lemma}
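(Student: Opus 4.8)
My plan is to prove the lemma by induction on $|V(G)|$. If $G$ has a vertex of degree at most one we are in case (a); if $G$ is disconnected, each component is again a nonempty series-parallel graph, so we may assume $G$ is connected and $\delta(G)\ge 2$. Series-parallel graphs are $2$-degenerate (every nonempty subgraph has a vertex of degree at most two), so $G$ contains a vertex $v$ of degree exactly two, and since $G$ is simple we may write $N(v)=\{u,w\}$ with $u\neq w$. On at most three vertices the only such $G$ is $K_3$, which satisfies (c) with $z=w$; so assume $|V(G)|\ge 4$ and that the lemma holds for all smaller series-parallel graphs.

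The main step is to pass to $G':=G-v$, a nonempty series-parallel graph on fewer vertices, and invoke the induction hypothesis. Deleting $v$ alters only the neighbourhoods of $u$ and $w$, each of which loses precisely $v$: every other vertex keeps its $G$-neighbourhood, and $\deg_G(u)=\deg_{G'}(u)+1$, $\deg_G(w)=\deg_{G'}(w)+1$. Hence, whenever the configuration the induction hypothesis supplies for $G'$ uses only vertices outside $\{u,w\}$, the same vertices witness the same configuration in $G$. The proof thus reduces to the cases where the inherited configuration meets $\{u,w\}$, which I would settle by a local analysis. For instance, if $G'$ satisfies (a) at $u$, then $\deg_G(u)=2$, say $N_G(u)=\{v,z\}$, and $(v,u,w,z)$ witnesses (c) in $G$. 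The cases where $G'$ satisfies (b), (c) or (d) with a vertex in $\{u,w\}$ are handled by similar but longer local manipulations — typically after distinguishing according to whether $w$ lies in the neighbourhood involved — and it is precisely here that configuration (d) enters: when the vertex $u$, now of degree three, turns out to be adjacent to two degree-two vertices each forming a triangle with $u$, one is forced into the five-vertex pattern of (d) rather than into (b) or (c).

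The genuinely delicate sub-cases are the symmetric ones, where the inherited configuration of $G'$ meets \emph{both} $u$ and $w$; the worst is when $\{u,w\}$ is itself the twin pair of an inherited case (b). Then $u$ and $w$ become degree-three twins, both adjacent to $v$ and to the two common vertices $p,q$ of their former neighbourhood, and one first observes that no $p$--$q$ path can avoid $\{u,w\}$: such a path together with the path $u\,v\,w$ would give a $K_4$-minor on $\{u,w,p,q\}$. Thus $\{u,w\}$ separates $p$ from $q$, and a further local analysis on one side of this separator — again invoking $K_4$-minor-freeness to bound how the degree-two vertices there can interact — then yields one of (b), (c), (d). The real obstacle, I expect, is making this casework exhaustive, and verifying that configuration (d) together with the ``not necessarily distinct'' clauses of (c) are exactly what is needed to absorb every configuration not already covered by (a)--(c); the rest is bookkeeping. (An alternative organisation, which I would fall back on if the above becomes unwieldy, inducts along the standard recursive construction of $2$-connected series-parallel graphs — each step subdividing an edge or adding a degree-two vertex across an edge — so that the same case analysis is indexed by the last construction step rather than by a chosen degree-two vertex.)
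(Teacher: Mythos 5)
This lemma is quoted from Juvan, Mohar and Thomas~\cite{JMT99}; the paper gives no proof of its own, so there is nothing to compare your route against. Judged on its own terms, your proposal is a plausible induction scheme but not a proof: the entire content of the lemma lies in the case analysis that you explicitly defer. The set-up (reduce to connected $G$ with $\delta(G)\ge 2$, use $2$-degeneracy to find a degree-$2$ vertex $v$ with $N(v)=\{u,w\}$, delete $v$, apply induction to $G'=G-v$) is routine and correct as far as it goes, and the one sub-case you actually carry out --- $G'$ satisfying (a) at $u$, lifted to (c) in $G$ --- checks out. But every sub-case in which the inherited configuration of $G'$ meets $\{u,w\}$ is waved at with ``similar but longer local manipulations'' or ``a further local analysis,'' and these are precisely the cases that force configurations (c) and (d) to have the exact shape they have. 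Your own closing sentence --- that the real obstacle is ``making this casework exhaustive, and verifying that configuration (d) together with the `not necessarily distinct' clauses of (c) are exactly what is needed'' --- concedes that the claim has not been established; asserting that the remaining work is ``bookkeeping'' is not a substitute for doing it, since it is entirely possible a priori that some sub-case produces a local structure matching none of (a)--(d), which would refute the induction step rather than complete it.

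Concretely, the gaps you must fill before this counts as a proof are: (i) $G'$ satisfies (b) with exactly one twin in $\{u,w\}$; (ii) $G'$ satisfies (b) with twin pair $\{u,w\}$ (your separator sketch via the $K_4$-minor on $\{u,w,p,q\}$ is the right instinct, but the ``further local analysis on one side of the separator'' is where the argument actually lives); (iii) $G'$ satisfies (c) with one of its four named vertices in $\{u,w\}$ --- note that if $u$ plays the role of the degree-$2$ vertex of (c) in $G'$, then $u$ has degree $3$ in $G$ and the configuration does not lift at all, so something new must be found; and (iv) $G'$ satisfies (d) with $u$ or $w$ among the five named vertices. Until each of these is resolved into one of (a)--(d) for $G$, the lemma is not proved. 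If the casework becomes unmanageable, your fallback of inducting along the recursive construction of $2$-connected series-parallel graphs is reasonable, but it carries the same burden of an exhaustive analysis at the final construction step.
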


\begin{proof}[Proof of Theorem~\ref{thm:series-parallel}]
Let $G$ be a non-empty bipartite series-parallel graph, say with bipartition classes $(U,W)$,
and we may assume that $G$ does not contain any isolated vertex.
Our argumentation is symmetric, so it suffices to show that there is a rare vertex among
the vertices in $U$.
The class of series-parallel graphs is closed under induced subgraphs, 
and thus by Lemma~\ref{reducedlem} we may assume that $G$ is reduced.

Let $L$ be the set of \emph{leaves} of $G$, that is, the set 
of degree~$1$ vertices.
If there is a leaf in $W$, we obtain with Lemma~\ref{onelem}  a rare vertex in $U$.
So we may assume that $L \subseteq U$.
Let $G' = G - L$ be the graph obtained by deleting all leaves. 
Since $L \subseteq U$, every vertex in $U \cap V(G')$ is of degree at least~$2$.
In particular, $G'$ is not empty.

We claim that in $G'$ there is some vertex $x \in W$ of degree at most $2$.
If the claim is true then 
Lemma~\ref{twolem} yields that some $y \in N_{G'}(x)\subseteq U$ is rare in $G$,
since every  neighbour of $x$ in $G-G'$ is a leaf.

So it remains to prove the claim.
Lemma~\ref{lem:SP-configs} yields that $G'$ contains one of 
the configurations in~\eqref{cond:deg1},~\eqref{cond:twins},~\eqref{cond:path}, or~\eqref{cond:triangles}.
Clearly,~\eqref{cond:triangles} is not possible since $G'$ is bipartite and thus triangle-free.

In case~\eqref{cond:deg1}, there is a leaf in $G'$, which then needs to be contained in $W$
because every vertex in $U \cap V(G')$ has degree at least~$2$.
In case~\eqref{cond:twins}, let $u,v$ be the two twins of degree~$2$.
If $u,v \in U$ then $u$ and $v$ are twins in $G$ as well, which is impossible
as $G$ is reduced.
Consequently, $u,v \in W$ and the claim is again verified.
In the last case~\eqref{cond:path}, there are two distinct vertices $u,v$ and two not necessarily distinct vertices $w,z \in V(G) \setminus \{u,v\}$ such that $N(v)=\{u,w\}$ and $N(u) \subseteq \{v,w,z\}$.
But $G'$ is bipartite and so $uw \notin E(G')$.
In particular, both $u$ and $v$ are of degree at most two.
Since $u$ and $v$ are adjacent, one of them is contained in $W$.
This completes the proof.
\end{proof}

\section{Discussion}

Lemmas~\ref{onelem} and~\ref{twolem} generalise the cases when there 
is a vertex $x$ of degree~$1$ or~$2$. Then, one of the neighbours of $x$ 
is \rickety.  
In contrast, the subcubic case required a bit of work.
This is because  
none of the neighbours of a vertex of degree at least~$3$ 
have to be \rickety. 
An example is given in Figure~\ref{twographs} on the left,
where no neighbour of the vertex~$v$
is \rickety. 
Note that both graphs in Figure~\ref{twographs} are subcubic.

Again, this is not new, in the sense that it corresponds 
directly to an observation of Sarvate and Renaud~\cite{SR90} in the set formulation:
A set of size three need not contain any element appearing
in at least half of the member sets of the union-closed set.  

\begin{figure}[ht]
\centering
\includegraphics[scale=0.8]{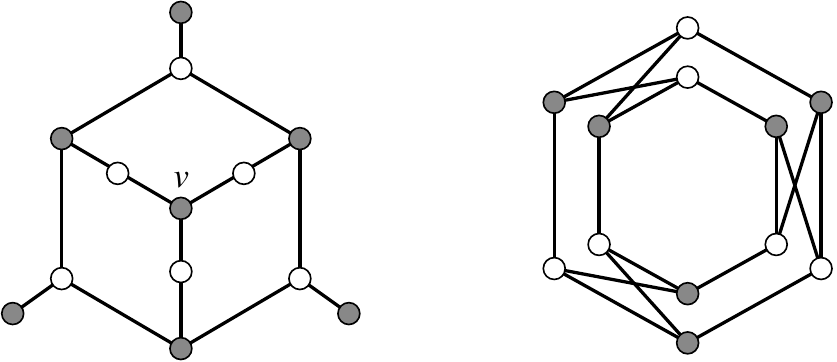}
\caption{Left: No neighbour of $v$ is \rickety. Right: Lemmas~\ref{onelem} or~\ref{twolem}
not applicable}\label{twographs} 
\end{figure}

As chordal bipartite graphs are exactly the $(C_6,C_8,C_{10},\ldots)$-free graphs
one may be tempted to generalise Theorem~\ref{chbipthm} by allowing one more
even cycle, the $6$-cycle, as induced subgraph. 
While Lemma~\ref{onelem} is no longer strong enough even for the $C_6$, 
Lemma~\ref{twolem} easily takes care of any graph with a degree~$2$ 
vertex in each bipartition class. In general, however, Lemma~\ref{twolem}
turns out to be too weak as well to prove the conjecture for 
$(C_8,C_{10},C_{12},\ldots)$-free graphs: 
The graph on the right in Figure~\ref{twographs}
is of that form but has no vertices covered by Lemma~\ref{twolem}.

%

\medskip 
We contend that the results in the previous section substantiate the usefulness
of the graph formulation of the union-closed sets conjecture. 
Moreover, we believe that a good number of other graph classes should be within 
reach. Does Frankl's conjecture hold for planar graphs, regular graphs
or for graphs of treewidth $3$?

\bibliographystyle{amsplain}
\bibliography{franklbib}

\end{document}